\newcommand{\arxiv}[2][]{\ifthenelse{\equal{#1}{}}
{\href{http://arxiv.org/abs/#2}{\tt arXiv:#2}}
{\href{http://arxiv.org/abs/math/#2}{\tt arXiv:math.#1/#2}}}
\theoremstyle{plain}
\newtheorem{theorem}{Theorem}[section]
\newtheorem{lemma}[theorem]{Lemma}
\newtheorem{corollary}[theorem]{Corollary}
\theoremstyle{definition}
\newtheorem{example}[theorem]{Example}
\newtheoremstyle{remark}
{}{}{}{}{\itshape}{}{ }{\thmname{#1}\thmnumber{ \itshape #2.}}
\theoremstyle{remark}
\newtheorem{remark}[theorem]{Remark}
\def\x{\times}
\def\but{\setminus}
\def\emb{\hookrightarrow}
\def\eps{\varepsilon}
\def\phi{\varphi}
\def\emptyset{\varnothing}
\renewcommand{\:}{\colon}
\def\xr#1{\xrightarrow{#1}}
\def\R{\mathbb{R}}
\def\Z{\mathbb{Z}}
\def\Cl#1{\overline{#1}}
\DeclareMathOperator{\id}{id}
\DeclareMathOperator{\im}{im}
\DeclareMathOperator{\sign}{sign}
\DeclareMathOperator{\lk}{lk}
\DeclareMathOperator{\cl}{cl}
\begin{document}

\title{Self $C_2$-equivalence of two-component links and invariants of link maps}
\author{Sergey A. Melikhov}
\address{Steklov Mathematical Institute of Russian Academy of Sciences,
8 Gubkina St., Moscow 119991, Russia}
\email{melikhov@mi-ras.ru}

\begin{abstract} We use Kirk's invariant of link maps $S^2\sqcup S^2\to S^4$ and its variations due to 
Koschorke and Kirk--Livingston to deduce results about classical links.
Namely, we give a new proof of the Nakanishi--Ohyama classification of two-component links in $S^3$ 
up to $\Delta$-link homotopy.
We also prove its version for string links, which is due (in a slightly different form) to Fleming--Yasuhara.
The proofs do not use Clasper Theory.
\end{abstract}

\maketitle

\section{Introduction}
Two classical links (or string links) are {\it $C_k$-equivalent} if they are related by a sequence of $C_k$-moves 
of Gusarov and Habiro, and {\it self $C_k$-equivalent} if additionally each of these $C_k$-moves involves 
strands only from one component.
Invariants of $C_{k+1}$-equivalence include all type $k$ invariants in the sense of Vassiliev \cite{Ha}, \cite{Gu},
whereas invariants of self $C_k$-equivalence include not only these, but also all type $k$ invariants in the sense 
of Kirk and Livingston (by similar arguments).
Kirk--Livingston type $k$ invariants \cite{KL} (see also \cite{M}) are those link invariants whose extension 
to singular links vanishes on all link maps with $>k$ double points (but not necessarily on all singular links 
with $>k$ double points).
They remain much more mysterious than Vassiliev finite type invariants.
In particular, Kirk and Livingston conjectured that the group of type $2$ invariants in their sense has infinite rank 
for $2$-component links with any linking number \cite{KL}. 

Self $C_1$-equivalence is better known as {\it link homotopy}.
String links are classified up to link homotopy by $\mu$-invariants with distinct indices \cite{HL}, and a link 
is link homotopic to the unlink if and only if all its $\bar\mu$-invariants with distinct indices vanish \cite{Mi1}.
There is an algorithmic classification of links up to link homotopy \cite{HL}, and classification in terms of 
explicit invariants is known for 3-component \cite{Mi1} and 4-component \cite{Le} links.

Self $C_2$-equivalence is also known as {\it $\Delta$-link homotopy} (since $C_2$-moves are also 
known as $\Delta$-moves). 
Yasuhara proved that a link is $\Delta$-link homotopic to the unlink if and only if all its $\bar\mu$-invariants 
with at most two occurrences of each index vanish \cite{Ya}.
He also proved that string links are classified up to the equivalence relation generated by $\Delta$-link homotopy 
and concordance by $\mu$-invariants in which at most two occurrences of each index vanish \cite{Ya2}.
(See also \cite{MY}*{\S6} for a classification of $2$-component string links up to the equivalence relation 
generated by self-$C_3$-equivalence and concordance.)
However, in general $\bar\mu$-invariants do not suffice even to detect self $C_k$-triviality: by a result of 
Fleming and Yasuhara \cite{FY}, the Whitehead double of the Whitehead link (which is a boundary link, so all 
its $\bar\mu$-invariants vanish) is not self $C_3$-equivalent to the unlink.
In fact, as observed by the same authors \cite{FY07}, self $C_2$-triviality of string links is already not detected 
by their $\mu$-invariants (see Example \ref{triviality} below).

In 2003, Nakanishi and Ohyama obtained a classification of $2$-component links up to $\Delta$-link homotopy \cite{NO}.
Namely, they are classified by the linking number and the generalized Sato--Levine invariant --- which are 
the first two coefficients of the power series $\nabla_L/(\nabla_{K_1}\nabla_{K_2})$, where $\nabla_L$ and 
$\nabla_{K_i}$ are the Conway polynomials of the link and of its components.
These two invariants also generate all Kirk--Livingston invariants of type $\le 1$ for two-component links \cite{KL}.
(See \S\ref{genSL} for further details on the generalized Sato--Levine invariant.)

Using Kirk's invariant of link maps $S^2\sqcup S^2\to S^4$ and its variations due to Koschorke and 
Kirk--Livingston (or rather their referee), we obtain a new proof of the Nakanishi--Ohyama theorem 
(Theorem \ref{NO}), and also of its version for string links (Theorem \ref{NOstring}); a slightly different 
string link version was obtained by Fleming and Yasuhara \cite{FY07}.
In the course of the proof we also compute the images of the three said link map invariants 
(Corollaries \ref{image-kirk}, \ref{koschorke+} and \ref{kltheorem}).
These images are known from the literature, but without proof in one case (the Kirk--Livingston invariant) and with 
accurate, but not very detailed proofs in the other two cases.
Since we actually need relative versions of these computations (Theorems \ref{image1} and \ref{image2}), 
we take the opportunity to give a more accessible exposition of all these matters, including the necessary 
background.
This exposition on link map invariants accounts for almost 2/3 of the length of the paper.
The role of link map invariants in our approach to $\Delta$-link homotopy is comparable to the role of 
Clasper Theory in the proofs by Nakanishi--Ohyama and Fleming--Yasuhara.

The point of our new proof of the Nakanishi--Ohyama theorem is that it provides
\begin{itemize}
\item a connection between $\Delta$-link homotopy in $S^3$ and link homotopy in $S^4$; 
\item an approach to classifying $n$-component string links up to $\Delta$-link homotopy.
\end{itemize}
This approach is being pursued in a subsequent work by the author.
In fact, the present note has been a part of that work in progress.
However, as the latter is growing into a longer text whose completion is competing for the author's time with 
a number of other projects, the two-component case now appears to make more sense as a self-contained 
standalone note. 
In fact, a two-page sketch of the present note was a part of a preprint privately circulated by the author in 2007.

\subsection{Sketch of the proof}

Suppose that $L,L'\:S^1_+\sqcup S^1_-\emb S^3$ are links with the same linking number $\lk(L)=\lk(L')$ and 
the same generalized Sato--Levine invariant $\beta(L)=\beta(L')$.
We want to show that they are $\Delta$-link homotopic.

Since $\lk(L)=\lk(L')$, there is a generic smooth link homotopy $h_t$ between $L$ and $L'$.
Every double point $z=h_t(p)=h_t(q)$ of this homotopy splits the singular circle $h_t(S^1_{\sigma_z})$ into 
two `lobes' $h_t(J_z)$ and $h_t(J_z')$, where $J_z$ and $J_z'$ are complementary arcs in $S^1_{\sigma_z}$ with
common endpoints $p$ and $q$.
The linking numbers $l_z$, $l'_z$ between these lobes and the other component $h_t(S^1_{-\sigma_z})$ satisfy
$l_z+l'_z=\lk(L)$.
Let us say that the double point $z$ ``algebraically cancels'' another double point $w$ of the same homotopy $h_t$
if they occur on the same component (i.e.\ $\sigma_z=\sigma_w$); the signs of $z$ and $w$ (as normal double points,
see \S\ref{basic2}) are opposite; and the unordered pairs $\{l_z,l_z'\}$ and $\{l_w,l_w'\}$ coincide.
Since $\beta(L)=\beta(L')$, it follows from a study of invariants of link maps (see Corollary \ref{realization2}) 
that $h_t$ can be amended (by inserting an appropriate self-link-homotopy of $L$) into a new link homotopy
between $L$ and $L'$ (still denoted $h_t$) whose double points can be grouped into algebraically canceling pairs.

The remainder of the proof is purely geometric (see \S\ref{geometry} for the details).
By ``delaying'' some crossing changes in $h_t$, we can modify it further so that all double points on $S^1_+$ 
occur during the time interval $[\frac15,\frac25]$ and those on $S^1_-$ occur during $[\frac35,\frac45]$, 
and each singular level of the homotopy contains precisely two double points $z$, $w$ with opposite signs and 
with $\{l_z,l_z'\}=\{l_w,l_w'\}$.
(The ``delaying'' of a crossing change in a homotopy is a straightforward geometric construction based on 
two facts: that a pair of crossing changes that are close in time ``commute'', and that each segment of 
the smooth homotopy involving no crossing changes is covered by an ambient isotopy.)
We may also assume that in fact $l_z=l_w$ and $J_z\subset J_w$ for each pair $(z,w)$ as above, since it is not 
hard to slide the endpoints of $J_z$ past those of $J_w$ in any desired way.

Since there are no nontrivial type $1$ Vassiliev invariants of knots, every knot can be undone into the unknot 
by $C_2$-moves.
Thus, by making use of the delaying technique, we may assume that $S^1_-$ is unknotted during the time interval 
$[\frac15,\frac25]$ and $S^1_+$ is unknotted during $[\frac35,\frac45]$.
Then for each pair $(z,w)$ of simultaneous double points as above, occurring at some time instant $t$,
the $h_t$-images of the two arcs $\Cl{J_w\but J_z}$ bound a Whitney disk in the complement to the other component,
$S^3\but h_t(S^1_{-\sigma_z})$.
The interior of this Whitney disk may intersect its own component $h_t(S^1_{\sigma_z})$, but if there is just one
intersection, then the effect of the two simultaneous self-intersections at $z$ and $w$ is the same as that of 
a self $C_2$-move.
If there are $n$ intersections, then the two simultaneous self-intersections can be similarly replaced by $n$ 
self $C_2$-moves. 

The proof of the string link version follows the same pattern, and in fact is considerably easier 
(in the algebraic part).

\section{Preliminaries} 

\subsection{Tangles, links and string links} \label{basic1}
A map $f$ between smooth manifolds $N$ and $M$, where $N$ is compact, is called {\it proper} if 
$f^{-1}(\partial M)=\partial N$.
By a {\it tangle} we mean a proper smooth embedding of a compact 1-manifold in a 3-manifold.
This includes {\it links} $L\:S^1\sqcup\dots\sqcup S^1\emb S^3$, tangles of the form 
$S^1\sqcup\dots\sqcup S^1\emb S^3\but\im(L)$, where $L$ is a link, 
and {\it string links} $\Lambda=\bigsqcup_i\Lambda_i\:I\sqcup\dots\sqcup I\emb I\x\R^2$,
where $\Lambda_i(j)=(j,i,0)$ for all $j\in\partial I$.
Here $I=[0,1]$ is endowed with the orientation given by the frame consisting of the tangent vector 
$\frac{\partial}{\partial t}$. 
The {\it trivial string link} $\Xi=\bigsqcup_i\Xi_i\:I\sqcup\dots\sqcup I\emb I\x\R^2$ 
is defined by $\Xi_i(t)=(t,i,0)$ for all $t\in I$.
When drawing string links, we mean $\{0\}\x\R^2$ to be on the left and $\{1\}\x\R^2$ to be on the right.
When drawing links or string links, we assume the orientation to be given by a frame $(v_1,v_2,v_3)$,
where $v_1$ and $v_2$ lie in the plane of the drawing, so that the rotation from $v_1$ to $v_2$ is counter-clockwise,
and $v_3$ points away from the reader.

\subsection{Singular tangles} \label{basic2}
A double point $z=f(x)=f(y)$ of a smooth map $f\:N\to M$ of a 1-manifold into a 3-manifold will be called {\it normal} 
if it is not a triple point, and the vectors $df_x(1)$, $df_y(1)$ in the tangent space $T_z M$ are linearly independent.
By a {\it singular tangle} we mean a proper smooth immersion of a compact 1-manifold in a 3-manifold with finitely many
double points, all of which are normal.

A double point $z=f(x,t)=f(y,t)$ of a smooth homotopy $F\:N\x I\to M$ will be called {\it normal} if 
it is not a triple point and the tangent vectors $dF_{(x,t)}(1,0)$, $dF_{(y,t)}(1,0)$ and 
$dF_{(x,t)}(0,1)-dF_{(y,t)}(0,1)$ are linearly independent.
The sign of the frame formed by these three vectors with respect to a fixed orientation of $M$ is called the {\it sign} 
of the normal double point of the homotopy.
(With the above orientation conventions, this agrees with the usual sign convention for finite type 
invariants of links and string links.)
Clearly, this sign does not change if $x$ and $y$ are interchanged.

\subsection{Link maps}
A {\it link map} $f\:N\to M$, where $M$ is a smooth manifold and $N$ is a compact smooth manifold with a fixed
presentation $N=N_1\sqcup\dots\sqcup N_m$ as a disjoint union, is a continuous proper map such that 
$f(N_i)\cap f(N_j)=\emptyset$ whenever $i\ne j$.
For our purposes, the $N_i$ will always be connected, so that the decomposition of $N$ into a disjoint union 
is automatic.
A {\it link homotopy} $h_t\:N\to M$ is a homotopy through link maps that keeps $\partial N$ fixed.
We will denote by $LM_{N\to M}$ the set of link homotopy classes of link maps $N\to M$.

A {\it link map concordance} (also called singular link concordance) between link maps 
$f_0,f_1\:N\to M$ is a link map $H\:N\x I\to M\x I$ such that 
$H(x,i)=(f_i(x),i)$ for $i=0,1$ and $H(x,t)=(f_0(x),t)$ for all $x\in\partial N$.
A link homotopy can be equivalently understood as a level-preserving link map concordance, and we will switch between
the two understandings freely. 
In general link map concordance does not imply imply link homotopy \cite{Sa}, but for link maps of codimension 
at least two it does, by a well-known unpublished result of X.-S. Lin (in the classical dimension) and P. Teichner
(in higher dimensions); see \cite{BT}*{\S5}, \cite{Hab}, \cite{M0}, \cite{M2} and references there 
(see also \cite{HL}*{Theorem 1.7}).

\subsection{(S)LH and (S)LC}
Let $LH_{L,L'}$ be the set of fiberwise link homotopy classes of link homotopies between tangles $L,L'\:N\to M$.
(A homotopy between two homotopies is called {\it fiberwise} if it goes through homotopies.)
Clearly, $LH_{N\to M}:=\bigsqcup_{L,L'\:N\to M} LH_{L,L'}$ is a groupoid, whose composition of morphisms 
is given by $([h],[h'])\mapsto [h*h']$, where $h*h'$ is obtained by consecutive execution of the link homotopies
$h$ and $h'$, and the inverse is given by $[h]\mapsto[\bar h]$, where $\bar h$ obtained by reversing time in $h$.
In particular, $SLH_L:=LH_{L,L}$ is a group under these operations.

Let $LC_{L,L'}$ denote the set of link homotopy classes of link map concordances between tangles $L,L'\:N\to M$.
Similarly to the above, $LC_{N\to M}:=\bigsqcup_{L,L'\:N\to M} LC_{L,L'}$ is a groupoid, and in particular, 
$SLC_L:=LC_{L,L}$ is a group.

\subsection{Action of SLH on LH} \label{action}
$SLH_L$ acts on the left on $LH_{L,L'}$, and this action is transitive and free.
Namely, upon selecting a basepoint $[b]\in LH_{L,L'}$ we get a bijection $\phi_b\:SLH_L\to LH_{L,L'}$ given by 
$[g]\mapsto [g*b]$, whose inverse is given by $[h]\mapsto [h*\bar b]$.
Similarly we also have a bijection $\psi_b\:SLH_{L'}\to LH_{L,L'}$, $[g]\mapsto [b*g]$, 
and the composition $\psi_b^{-1}\phi_b\:SLH_L\to SLH_{L'}$, $[g]\mapsto [\bar b*g*b]$, 
is easily seen to be a group homomorphism, hence an isomorphism.

Now let $\Lambda\:I\sqcup\dots\sqcup I\emb I\x\R^2$ be a string link.
Then $SLH_\Lambda$ is isomorphic to $SLH_\Xi$, and consequently $SLH_\Xi$ has two actions 
on each $LH_{\Lambda,\Lambda'}$ for any choice of $b\in LH_{\Xi,\Lambda}$ and $b'\in LH_{\Xi,\Lambda'}$:
a left action $[g]\cdot[h]=[\bar b*g*b*h]$ and a right action $[h]\cdot[g]=[h*\bar b'*g*b']$.
If $\#$ denotes the usual stacked sum of string links, and also the stacked sum of link homotopies, then
$LH_{\Lambda,\Lambda'}=LH_{\Lambda\#\Xi,\,\Lambda'\#\Xi}$ and $[h]=[h\# 1_\Xi]$, where $1_\Xi$ is the identical 
link homotopy of $\Xi$, and consequently 
\[ [g]\cdot[h]=[(\bar b\#1_\Xi)*(1_\Xi\# g)*(b\#1_\Xi)*(h\#1_\Xi)]=
[(\bar b*1_\Xi*b*h)\#(1_\Xi*g*1_\Xi*1_\Xi)]=[h\# g]\]
and similarly
\[ [h]\cdot[g]=[(h\#1_\Xi)*(\bar b'\#1_\Xi)*(1_\Xi\# g)*(b'\#1_\Xi)]=
[(h*\bar b'*1_\Xi*b')\#(1_\Xi*1_\Xi*g*1_\Xi)]=[h\# g].\]
Thus the two actions are independent of the choice of $b$ and $b'$ and coincide with each other.
In particular, by considering $\Lambda=\Lambda'=\Xi$ we get that $SLH_\Xi$ is abelian, and that its 
multiplication can be described using $\#$ rather than $*$.
However, we do not have an alternative description of the inverse in this group.

\subsection{Inverse in SLC}
All that has been said above about the structure of $LH_{N\to M}$ also applies to $LC_{N\to M}$.
In addition, the inverse in $SLC_\Xi$ can be described using reflection.
Namely, the reflection $\rho\Lambda=\bigsqcup\rho\Lambda_i$ of a string link $\Lambda=\bigsqcup_i\Lambda_i$ is 
defined by precomposing each $\Lambda_i$ with the reflection $I\to I$, $t\mapsto 1-t$, and postcomposing it 
with the reflection $I\x\R^2\to I\x\R^2$, $(t,x)\mapsto (1-t,x)$.
Similarly, the reflection $\rho H=\bigsqcup\rho H_i$ of a link map concordance $H=\bigsqcup_i H_i$ of string links
is defined by precomposing each $H_i$ with the reflection $I\x I\to I\x I$, $(t,s)\mapsto (1-t,s)$, and postcomposing 
it with the reflection $I\x\R^2\x I\to I\x\R^2\x I$, $(t,x,s)\mapsto (1-t,x,s)$.
By a standard construction, $\Lambda\#\rho\Lambda$ is concordant to $\Xi$; similarly, $H\#\rho H$ is
link map concordant to $1_\Xi$.

\section{Algebra: string links}

The present section is essentially an extended version of \cite{MR1}*{\S2.4} (which is two pages of somewhat 
terse text).
Here we slightly improve on the result, present the proof in more detail, and review the necessary background.

\subsection{Kirk--Koschorke invariant}

Let $h=h_+\sqcup h_-\:(I\sqcup I)\x I\to I\x\R^2\x I$ be a generic link homotopy between string links 
$\Lambda,\Lambda'\:I\sqcup I\emb I\x\R^2$; here ``generic'' means that it is a self-transverse regular homotopy with 
at most one double point at each instant of time.
In particular, the sets $\Delta(h_+)$ and $\Delta(h_-)$ of double points of $h_+$ and $h_-$ are finite.

For each $h_+(x,t)=h_+(y,t)=z\in\Delta(h_+)$ let $\eps_z$ be the sign of $z$ (as a normal double point, see
\S\ref{basic2}) and let $J_z=[x,y]$ be the unique arc in $I$ between $x$ and $y$, with orientation induced from 
a fixed orientation of $I$.
Let $l_z$ be the linking number between $h_+(J_z\x\{t\})$ and $h_-(I\x\{t\})$, that is, the class of 
$h_+(J_z\x\{t\})$ in $H_1\big(I\x\R^2\x\{t\}\but h_-(I\x\{t\})\big)\simeq H^1(I,\partial I)\simeq\Z$.

We define $\Sigma(h)$ to be the pair of Laurent polynomials
\[\Sigma_+(h):=\sum_{z\in\Delta(h_+)}\eps_z (t^{l_z}-1)\in\Z[t^{\pm 1}],\]
and $\Sigma_-(h)$, which is defined similarly by interchanging the roles of $h_+$ and $h_-$.
The point of subtracting $1$ is to kill the contribution from each double point $z\in\Delta(h_\pm)$ such that $l_z=0$; 
because of this subtraction, $\Sigma(h)|_{t=1}=(0,0)$.

It is easy to see that $\Sigma(h)$ is invariant under fiberwise link homotopy of $h$, since for each 
integer $g\ne 0$ the oriented $0$-manifold $\Delta_g(h_\pm)=\{z\in\Delta(h_\pm)\mid l_z=g\}$ changes by 
an oriented bordism under a generic fiberwise link homotopy of $h$.
In particular, $\Sigma(h)$ is well-defined for an arbitrary (not necessarily generic) link homotopy $h$ between
string links (since its sufficiently close generic approximations $h'$, $h''$ are related by a generic 
fiberwise link homotopy).

When $h$ is a self-link-homotopy of the trivial string link $\Xi$, $\Sigma(h)$ is essentially
the same as Koschorke's fiberwise, basepoint-preserving version of Kirk's invariant \cite{Ko89}.

Clearly, $\Sigma\:LH_{I\sqcup I\to I\x\R^2}\to\Z[t^{\pm 1}]\oplus\Z[t^{\pm 1}]$ is a morphism of groupoids, 
due to $\Sigma(h*h')=\Sigma(h)+\Sigma(h')$ and $\Sigma(\bar h)=-\Sigma(h)$.
In particular, $\Sigma\:SLH_\Lambda\to\Z[t^{\pm1}]\oplus\Z[t^{\pm1}]$ is a group homomorphism.

\subsection{Jin suspension}

A two-component string link $\Lambda=\Lambda_+\sqcup\Lambda_-\:I\sqcup I\emb I\x\R^2$ 
(where $\Lambda_\pm(i)=(i,\pm1,0)$ for $i\in\partial I$) will be called {\it semi-contractible} if it factors through 
a proper embedding into $I\x [-1,1]\x [-1,1]$, and $\Lambda_\pm$ is homotopic to $\Xi_\pm$ (where 
$\Xi_\pm(t)=(t,\pm 1,0)$ for $t\in I$) with values in $X_\pm:=I\x [-1,1]\x [-1,1]\but\Lambda_\mp(I)$ by a homotopy 
$h_\pm$ keeping the endpoints fixed.
Let us note that any other such homotopy $h_\pm'$ is homotopic to $h_\pm$ through such homotopies since 
$\pi_2(X_\pm)=0$ by the Sphere Theorem of Papakiryakopoulos.
Clearly, if $\Lambda$ is a semi-contractible string link, then $\lk(\Lambda)=0$.
The converse is not true in general, but if $\Lambda$ has unknotted components and zero linking number, then it is 
semi-contractible.
(Indeed, since $\lk(\Lambda)=0$, the loop formed by $\Lambda_\pm$ and $\Xi_\pm$ is null-homologous in $X_\pm$; 
but if $\Lambda_\mp$ is unknotted, then $\pi_1(X_\pm)\simeq\Z$, and therefore this loop is in fact null-homotopic 
in $X_\pm$.) 

Given a semi-contractible string link $\Lambda=\Lambda_+\sqcup\Lambda_-$, a homotopy $h_\pm$ as above extends to 
the link homotopy $H_{\pm t}:=h_{\pm t}\sqcup\Lambda_\mp\:I\sqcup I\to I\times\R^2$ from $\Lambda$ to 
$\Xi_\pm\sqcup\Lambda_\mp$, which moves only one component. 
Let $G_\pm$ be the linear link homotopy from $\Xi_\pm\sqcup\Lambda_\mp$ to $\Xi$, that is, 
$G_{\pm t}=\Xi_\pm\sqcup g_{\pm t}$, where $g_{\pm t}(s)=(1-t)\Lambda_\mp(s)+t\Xi_\mp(s)$.
Then $H_+*G_+$ and $H_-*G_-$ are link homotopies from $\Lambda$ to $\Xi$.
Consequently $\frak J L\:=\bar G_+*\bar H_+* H_-* G_-$ is a self-link-homotopy of $\Xi$, which is well-defined 
up to fiberwise link homotopy and is called the {\it Jin suspension} of the semi-contractible string link $\Lambda$. 
This is parallel to Koschorke's basepoint-preserving version of Jin's construction \cite{Ko89}.
We write $\bar{\frak J}\Lambda$ to mean $\frak J\Lambda$ with time reversed.
Thus $\Sigma(\bar{\frak J}\Lambda)=-\Sigma(\frak J\Lambda)$.

\begin{figure}[h]
\includegraphics{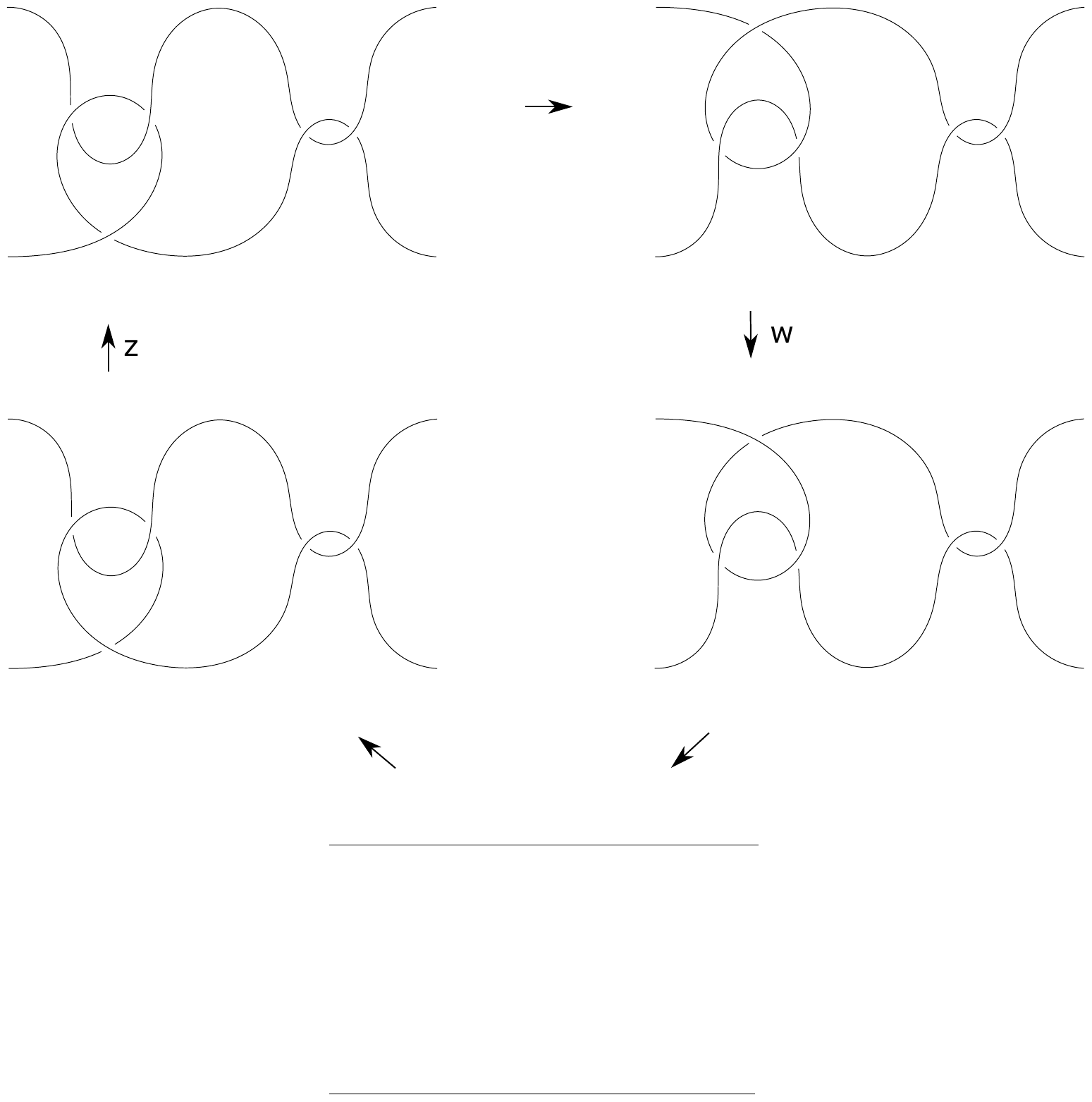}
\caption{The Jin suspension of the Whitehead string link $\Psi$}
\label{fennrolfsen}
\end{figure}

Much of what follows will rest on the following two examples.

\begin{example} (Compare \cite{FR}, \cite{Ko89}, \cite{MR1}.) \label{fenn-rolfsen}
Let $\Psi$ be the Whitehead string link, shown in the top left and (again) in the top right of 
Figure \ref{fennrolfsen}.
If $\cl$ denotes the usual closure operation from string links to links, $\cl\Psi$ is the Whitehead link.
The Jin suspension $\frak J(\Psi)$ is the self-link-homotopy of the trivial string link shown in 
Figure \ref{fennrolfsen}.
It has two double points: a self-intersection $z$ of the first component with $\eps_z=1$ and $l_z=1$; 
and a self-intersection $w$ of the second component with $\eps_w=-1$ and $l_w=1$.
Thus
\[\Sigma(\frak J\Psi)=\big(t-1,\ 1-t\big).\]
\end{example}

\begin{example} (Compare \cite{Ki}, \cite{Ko89}, \cite{MR1}.) \label{Wh-links}
We consider a family of string links $\Psi_{mn}$ such that $\cl\Psi_{mn}$ is the Whitehead link
$(m,1)$-cabled along the first component and $(n,1)$-cabled along the second component.
$\Psi_{3,-2}$ is shown in the top left and (again) in the top right of Figure ~\ref{fennrolfsen2}.
The Jin suspension $\frak J(\Psi_{mn})$ can be represented by a link homotopy with $2m^2+2n^2$ double points
(see Figure \ref{fennrolfsen2} for the case of $\frak J(\Psi_{3,-2})$): 
\begin{itemize}
\item $z_{ij}$ on the 1st component, $1\le i,j\le m$, with 
$\eps_{z_{ij}}=1$ and $l_{z_{ij}}=n\sign(i-j)$;
\item $z_{ij}'$ on the 1st component, $1\le i,j\le m$, with 
$\eps_{z'_{ij}}=-1$ and $l_{z'_{ij}}=0$;
\item $w_{ij}$ on the 2nd component, $1\le i,j\le n$, with 
$\eps_{w_{ij}}=-1$ and $l_{w_{ij}}=m\sign(i-j)$;
\item $w_{ij}'$ on the 2nd component, $1\le i,j\le n$, with 
$\eps_{w'_{ij}}=1$ and $l_{w'_{ij}}=0$.
\end{itemize}
Here $\sign 0=1$. 
Thus
\[\Sigma(\frak J\Psi_{mn})=\big(\tfrac{m^2+m}2(t^n-1)+\tfrac{m^2-m}2(t^{-n}-1),\ 
\tfrac{n^2+n}2(1-t^m)+\tfrac{n^2-n}2(1-t^{-m})\big).\]

For our purposes only $\Psi_n^+:=\Psi_{n1}$ and $\Psi_n^-:=\Psi_{1n}$ will be needed.
We have
\begin{align*}\Sigma(\frak J\Psi_n^+)&=\big(\tfrac{n^2+n}2(t-1)+\tfrac{n^2-n}2(t^{-1}-1),\ 1-t^n\big)\\
&=\big(\tfrac{n^2}2(t+t^{-1}-2)+\tfrac{n}2(t-t^{-1}),\ 1-t^n\big)
\end{align*}
and
\[\Sigma_\pm(\frak J\Psi_n^-)=-\Sigma_\mp(\frak J\Psi_n^+).\]
\end{example}

\begin{example} \cite{MR1} \label{Wh-refl}
Clearly, $\Sigma\big(\frak J(\rho\Lambda)\big)(t)=-\Sigma(\frak J\Lambda)(t^{-1})$ and
$\Sigma\big(\frak J(\Lambda\#\Lambda')\big)=\Sigma(\frak J\Lambda\#\frak J\Lambda'\big)=
\Sigma(\frak J\Lambda)+\Sigma(\frak J\Lambda')$.
Consequently, \[\Sigma\big(\frak J(\Psi_n^+\#\rho \Psi_n^+)\big)=\big(n(t-t^{-1}),\,t^{-n}-t^n\big).\]
Let us note that by a standard construction, $\Psi_n^+\#\rho \Psi_n^+$ is concordant to $\Xi$,
and consequently $\frak J(\Psi_n^+\#\rho \Psi_n^+)$ is fiberwise link map concordant to $1_\Xi$.
Since link map concordance implies link homotopy in codimension two (in the usual, non-fiberwise setting), 
$\frak J(\Psi_n^+\#\rho \Psi_n^+)$ is also link homotopic to $1_\Xi$.
Thus $\Sigma(h)$ is invariant neither under fiberwise link map concordance of $h$, nor under (non-fiberwise)
link homotopy.
\end{example} 

\begin{figure}[h]
\includegraphics[width=15.1cm]{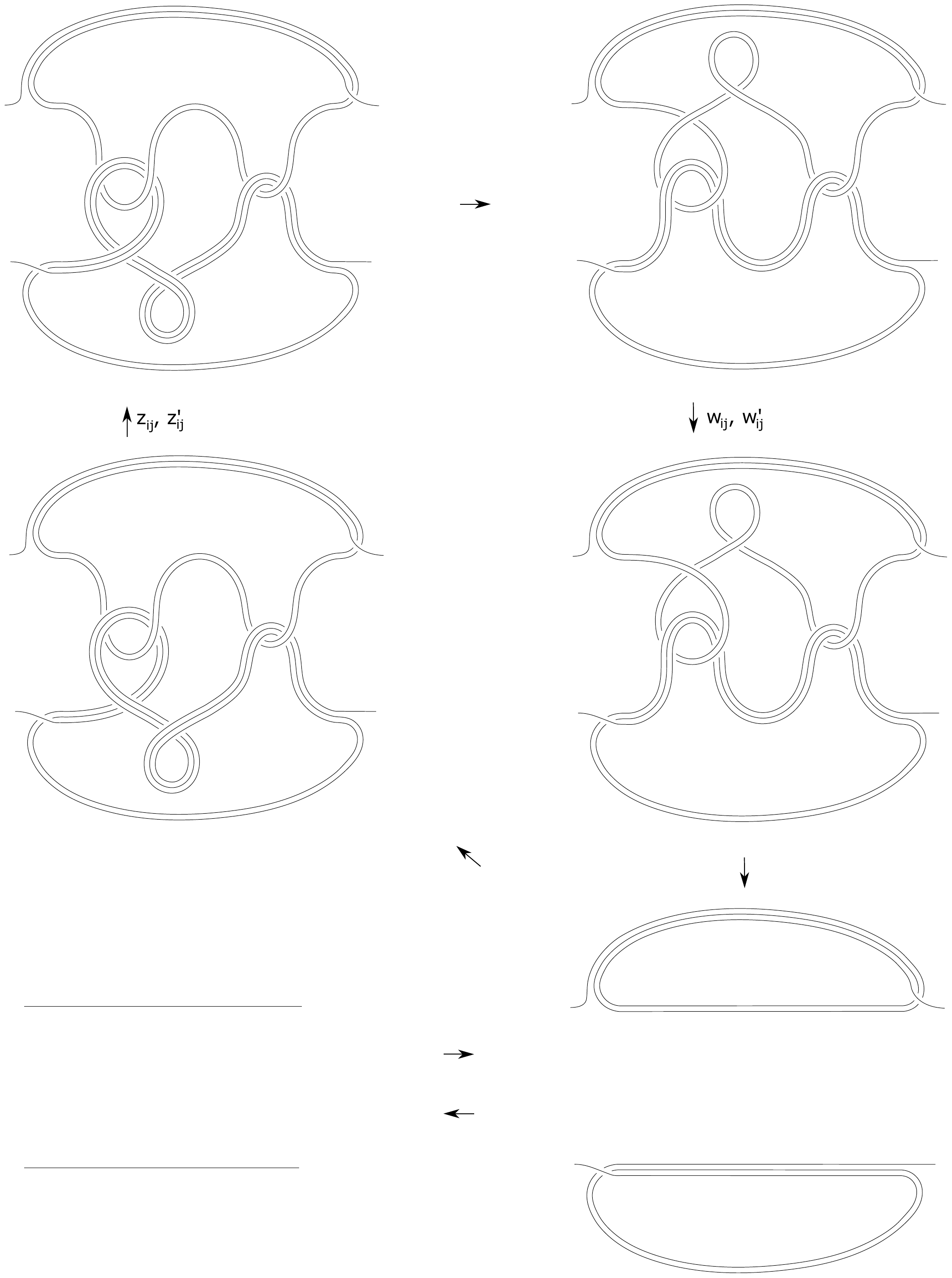}
\caption{The Jin suspension of $\Psi_{3,-2}$}
\label{fennrolfsen2}
\end{figure}

\subsection{Kirk's invariant}

Now let $h=h_+\sqcup h_-\:(I\sqcup I)\x I\to I\x\R^2\x I$ be a generic link map concordance between string links 
$\Lambda,\Lambda'\:I\sqcup I\emb I\x\R^2$; here ``generic'' means that it is a self-transverse smooth immersion.
In particular, the sets $\Delta(h_+)$ and $\Delta(h_-)$ of the double points of $h_+$ and $h_-$ are finite.

For each $h_+(x)=h_+(y)=z\in\Delta(h_+)$ let $J_z\subset I^2$ be an arc between $x$ and $y$.
Then $h_+(J_z)$ is a loop in $I\x\R^2\x I\but h_-(I^2)$.
Let $l_z$ be the linking number between $h_+(J_z)$ and $h_-(I^2)$, that is, the class of $h_+(J_z)$ 
in $H_1\big(I\x\R^2\x I\but h_-(I^2)\big)\simeq H^2(I^2,\partial I^2)\simeq\Z$.
As long as the ordering of the pair of points $(x,y)$ is fixed, $l_z$ does not depend on the choice of $J_z$,
since any two such arcs are homotopic keeping the endpoints fixed.
However, there is no natural ordering of $(x,y)$, so in fact $l_z$ is defined only up to a sign.

Also let $\eps_z=\pm 1$ be the sign of the double point $z$, which is defined to be $+1$ if and only if 
the orientation of $T_z(I\x\R^2\x I)=dh_x(T_x I^2)\oplus dh_y(T_y I^2)$ determined by a fixed orientation of $I^2$ 
agrees with a fixed orientation of $I\x \R^2\x I$.
Namely, these orientations are determined by the usual orientations of $I$ and $I\x\R^2\x I$ 
along with the usual orientation of the time factor $I$.
Let us note that the factor exchanging involution on $\R^2\x\R^2$ is orientation-preserving, so $\eps_z$ 
does not depend on the ordering of $(x,y)$. 

We define $\sigma(h)$ to be the ordered pair of polynomials
\[\sigma_+(h):=\sum_{z\in\Delta(h_+)}\eps_z (t^{|l_z|}-1)\in\Z[t],\]
and $\sigma_-(h)$, which is defined similarly by interchanging the roles of $h_+$ and $h_-$.
It is not hard to see that $\sigma(h)$ is invariant under link homotopy (and hence is well-defined for 
an arbitrary, not necessarily generic, link map concordance $h$ between string links), and even under link map 
concordance of $h$. 
Clearly, $\sigma(h)|_{t=1}=(0,0)$.
If $h$ is a link homotopy between string links, then obviously $\sigma_\pm(h)=|\Sigma_\pm(h)|$, where
$|\cdot|\:\Z[t^{\pm 1}]\to\Z[t]$ is given by $t^n\mapsto t^{|n|}$.

Let $f=f_1\sqcup f_2\:S^2\sqcup S^2\to S^4$ be a link map.
Let us connect a generic point in $f_1(S^2)$ with a generic point in $f_2(S^2)$ by an arc $J$ that is otherwise 
disjoint from $f(S^2\sqcup S^2)$.
Then an appropriate small regular neighborhood of $J$ in $S^4$ is a $4$-ball $B$ that meets each $f_i(S^2)$ in 
a $2$-disk.
Its exterior $\Cl{S^4\but B}$ is homeomorphic to $I\x[-1,1]\x[0,3]\x I$ by a homeomorphism $h$ that takes
each $f_i(S^2)\cap\partial B$ onto $I\x\{0\}\x\{i\}\x\partial I\cup(\partial I)\x\{0\}\x\{i\}\x I$.
Thus $hf$ is a self-link-map-concordance of $\Xi$.
Then $\sigma(f):=\sigma(hf)$ is nothing but Kirk's original invariant of $f$ \cite{Ki} (see also 
\cite{Ki2}, \cite{Ko90} for direct proofs of the invariance of Kirk's invariant under link map concordance).
It has long been an open problem whether Kirk's invariant of link maps $S^2\sqcup S^2\to S^4$ is injective; 
according to the recent preprint \cite{ST}, it is.

Clearly, $\sigma\:LC_{I\sqcup I\to I\x\R^2}\to\Z[t]\oplus\Z[t]$ is a morphism of groupoids, and in particular
$\sigma\:SLC_\Lambda\to\Z[t]\oplus\Z[t]$ is a group homomorphism.

\subsection{Generalized Sato--Levine invariant}\label{genSL}

The generalized Sato--Levine invariant $\beta(L)$ of the two-component link $L=K\sqcup K'\:S^1\sqcup S^1\emb S^3$ 
was discovered independently by Akhmetiev (see \cite{AR}), Kirk--Livingston \cite{KL}, Polyak--Viro and Traldi
(see references in \cite{MR1}*{\S2.2, Remark (i)}).
In fact, as observed in \cite{KMT}*{proof of Theorem 7.2}, it can be defined as 
$\beta(L)=c_3(L)+c_1(L)\big(c_2(K)+c_2(K')\big)$, where $c_i$ is the coefficient of the Conway polynomial at $z^i$.
From this it is easy to check (see \cite{Liv}*{Theorem 8.7} or \cite{Ni}*{Lemma 2.1} for the details) that 
for a pair of two-component links $L_+=K_+\sqcup K'$ 
and $L_-=K_-\sqcup K'$ related by a single crossing change on the first component,
\[\beta(L_+)-\beta(L_-)=\eps\lk(K_1,K')\lk(K_2,K'),\]
where $K_1$ and $K_2$ are the two lobes of the intermediate singular knot, and $\eps$ is the sign of the crossing
change (see \S\ref{basic2} concerning this sign).
The similar formula holds for crossing changes on the second component.
From these crossing change formulas and Figure \ref{simdelta}
it is easy to see that $\beta$ is an invariant of self $C_2$-equivalence.

\begin{example} \label{triviality}
Let $\Lambda_k$ be some fixed string link with $\lk(\Lambda_k)=k$ and with $\beta(\cl\Lambda_k)=0$; 
for example, we may take $\Lambda_k$ to be the pure braid $\sigma_{12}^{2k}$.
Let us write $\cl_k\Lambda=\cl(\Lambda\#\Lambda_k)$.
Thus $\beta(\cl_k\Xi)=0$ for all $k$.

Let $\Psi$ be the Whitehead string link (see Example \ref{fenn-rolfsen}).
There is an obvious link homotopy between $\Psi\#\rho\Psi$ and $\Xi$ with two double points in the first component.
These double points have opposite signs, and also opposite linking numbers of the loops $l_z$ with
the second component.
Hence \[\beta\big(\cl_0(\Psi\#\rho\Psi)\big)-\beta(\cl_0\Xi)=1\cdot 1\cdot (-1)+(-1)\cdot (-1)\cdot 1=0,\] and 
consequently $\beta\big(\cl_0(\Psi\#\rho\Psi)\big)=0$.
On the other hand, \[\beta\big(\cl_1(\Psi\#\rho\Psi)\big)-\beta(\cl_1\Xi)=1\cdot 1\cdot 0+(-1)\cdot(-1)\cdot 2=0,\] 
and consequently $\beta\big(\cl_0(\Psi\#\rho\Psi)\big)=2$.

Thus $\Psi\#\rho\Psi$ is not self $C_2$-equivalent to $\Xi$.
On the other hand, $\Psi\#\rho\Psi$ is concordant to $\Xi$ by a standard construction, and in particular all
its $\mu$-invariants vanish (one of them is $\mu(1122)=\beta\circ\cl_0$).
Thus self $C_2$-triviality of string links is not detected by their $\mu$-invariants.
\end{example}

\subsection{Constraints}

Let $h=h_+\sqcup h_-$ be a generic link homotopy between the string links $\Lambda,\Lambda'\:I\sqcup I\emb I\x\R^2$.
Let $\lambda=\lk(\Lambda)=\lk(\Lambda')$.
Then $\lk(\cl_{k-\lambda}L)=k$, so that, writing $\beta_k(\Lambda)=\beta(\cl_{k-\lambda}\Lambda)$, we have
\[\beta_k(\Lambda')-\beta_k(\Lambda)=
\sum_{z\in\Delta(h_+)}\eps_z l_z (k-l_z)+\sum_{z\in\Delta(h_-)}\eps_z l_z (k-l_z).\]
This yields two constraints on $\Sigma$:
\[\tfrac{d}{dt}\Sigma_+(h)+\tfrac{d}{dt}\Sigma_-(h)\Big|_{t=1}=
\beta_1(\Lambda')-\beta_1(\Lambda)+
\beta_0(\Lambda)-\beta_0(\Lambda')\]
and
\[\tfrac{d^2}{dt}\Sigma_+(h)+\tfrac{d^2}{dt}\Sigma_-(h)\Big|_{t=1}=
\beta_1(\Lambda)-\beta_1(\Lambda'),\]
and one constraint on $\sigma$:
\[\tfrac{d}{dt}\sigma_+(h)+\tfrac{d}{dt}\sigma_-(h)+
 \tfrac{d^2}{dt}\sigma_+(h)+\tfrac{d^2}{dt}\sigma_-(h)\Big|_{t=1}=
\beta_0(\Lambda)-\beta_0(\Lambda').\]

It turns out that these constraints suffice to determine the images of $\Sigma$ and $\sigma$:

\begin{theorem} \label{image1}
Let $\Lambda$ and $\Lambda'$ be link homotopic two-component string links.

(a) $\Sigma(LH_{\Lambda,\Lambda'})$ equals
\[\Delta^{-1}\big(0,0,\ \beta_1(\Lambda')-\beta_1(\Lambda)+\beta_0(\Lambda)-\beta_0(\Lambda'),\ \,
\beta_1(\Lambda)-\beta_1(\Lambda')\big),\]
where $\Delta\:\Z[t^{\pm 1}]\oplus\Z[t^{\pm1}]\to\Z\oplus\Z\oplus\Z\oplus\Z$ is the group homomorphism given by 
\[(f,g)\mapsto \big(f|_{t=1},\,g|_{t=1},\,f'+g'|_{t=1},\,f''+g''|_{t=1}\big).\]

(b) $\sigma(LH_{\Lambda,\Lambda'})$ equals
\[\delta^{-1}\big(0,0,\,\beta_0(\Lambda)-\beta_0(\Lambda')\big),\] 
where $\delta\:\Z[t]\oplus\Z[t]\to\Z\oplus\Z\oplus\Z$ is the group homomorphism given by 
\[(f,g)\mapsto \big(f|_{t=1},\,g|_{t=1},\,f'+g'+f''+g''|_{t=1}\big).\]
\end{theorem}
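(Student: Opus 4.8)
The plan is to reduce both parts to the determination of the images of $\Sigma$ and $\sigma$ on $SLH_\Xi$, and then to realize the resulting kernels explicitly by the Jin suspensions of Examples \ref{fenn-rolfsen} and \ref{Wh-links}. Fix a basepoint $[b]\in LH_{\Lambda,\Lambda'}$. Since $\Sigma$ is a morphism of groupoids and the conjugation isomorphism $SLH_\Lambda\cong SLH_\Xi$ of \S\ref{action} carries $\Sigma$ to $\Sigma$ (because $\Sigma(\bar c*g*c)=\Sigma(g)$), the free transitive action of \S\ref{action} gives
\[\Sigma(LH_{\Lambda,\Lambda'})=\Sigma(b)+\Sigma(SLH_\Xi),\]
a coset of the subgroup $\Sigma(SLH_\Xi)$; likewise $\sigma(LH_{\Lambda,\Lambda'})=\sigma(b)+\sigma(SLH_\Xi)$, using that on link homotopies $\sigma_\pm=|\Sigma_\pm|$ is still a groupoid morphism. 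The constraints established above show, on the one hand, that $\Delta(\Sigma(b))$ and $\delta(\sigma(b))$ are exactly the prescribed tuples (this is the coset representative, giving the inclusion $\subseteq$), and on the other hand, upon setting $\Lambda=\Lambda'=\Xi$, that $\Sigma(SLH_\Xi)\subseteq\ker\Delta$ and $\sigma(SLH_\Xi)\subseteq\ker\delta$. Thus everything reduces to the two equalities $\Sigma(SLH_\Xi)=\ker\Delta$ and $\sigma(SLH_\Xi)=\ker\delta$, and only the inclusions $\supseteq$ remain.

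For part (a) I would prove $\ker\Delta\subseteq\Sigma(SLH_\Xi)$ by showing that the subgroup $G$ generated by the values $\Sigma(\frak J\Psi_{mn})$ already equals $\ker\Delta$. Write $E\:\Z[t^{\pm1}]\to\Z^3$, $f\mapsto(f,f',f'')|_{t=1}$; since $f''(1)=\sum n(n-1)a_n$ is always even, the Taylor expansion of $f$ at $t=1$ has integer coefficients and $\ker E=(t-1)^3\Z[t^{\pm1}]$. Organizing $\ker\Delta$ by this order-of-vanishing filtration, I proceed in two steps. First, the jet level: $\Sigma(\frak J\Psi)=(t-1,1-t)$ and $\Sigma(\frak J\Psi_2^+)-2\Sigma(\frak J\Psi)$ realize the vectors $(1,0,-1,0)$ and $(0,2,0,-2)$ under the coupled jet homomorphism $(f,g)\mapsto(f',f'',g',g'')|_{t=1}$, and these generate its image on $\ker\Delta$; hence $G+(\ker E\oplus\ker E)=\ker\Delta$. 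Second, the deep part $\ker E\oplus\ker E$, where the real work lies: writing $A_{mn}=\binom{m+1}2(t^n-1)+\binom m2(t^{-n}-1)$, the combination $\Sigma(\frak J\Psi_n^-)-\binom{n+1}2\Sigma(\frak J\Psi)+\binom n2\Sigma(\frak J(\rho\Psi))$ has vanishing second coordinate and first coordinate $A_{1,n}-A_{n,1}\in\ker E$. The identity $t(A_{1,2}-A_{2,1})=(t-1)^3$ and its shifted analogues show that these first coordinates span $(t-1)^3\Z[t^{\pm1}]$, so $\ker E\oplus 0\subseteq G$; applying the involution $(x,y)\mapsto(-y,-x)$, under which $\Sigma(\frak J\Psi_{mn})\mapsto\Sigma(\frak J\Psi_{nm})$, yields $0\oplus\ker E\subseteq G$. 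With the first step this gives $G=\ker\Delta$.

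For part (b) I would run the parallel argument using the homomorphism $|\cdot|$. The governing functional is the second moment $\mu(F):=F'(1)+F''(1)=\sum_k k^2c_k$, which is precisely the one appearing in $\delta$; one computes $\sigma(\frak J\Psi_{mn})=\bigl(m^2(t^{|n|}-1),\,-n^2(t^{|m|}-1)\bigr)$, so $\sigma(\frak J\Psi)$ realizes the jet coupling, while the elements $r_n:=(t^n-1)-n^2(t-1)=|A_{1,n}-A_{n,1}|$ arise as pure one-coordinate values exactly as above and form a basis of $\ker E$'s analogue, spanning the deep part. Alternatively, part (b) can be deduced from part (a) by applying $|\cdot|$ to the coset: the moment identity $\mu(|f|)=\sum_k k^2c_k=(f'+f'')|_{t=1}$ shows $|\ker\Delta|\subseteq\ker\delta$, and the reverse inclusion follows once one notes that for $(F,G)\in\ker\delta$ the quantity $F'(1)+G'(1)$ is even, since $\sum_k k\,e_k\equiv\sum_k k^2e_k=0\pmod2$ for $e_k=c_k^F+c_k^G$; this lets one correct a preimage inside $\ker|\cdot|=\langle t^k-t^{-k}\rangle$.

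The main obstacle is the second step of the realization in part (a) (and its analogue in (b)): the kernels $\ker\Delta$ and $\ker\delta$ have infinite rank, so one must check that the finite list of example families genuinely generates them, and in particular exhausts the depth-$\ge 3$ submodule $\ker E\oplus\ker E$. The delicate point is producing invariants supported on a single component — with one coordinate identically zero — as honest $\Z$-linear combinations of the $\Sigma(\frak J\Psi_{mn})$; this is what forces the explicit cancellation behind $t(A_{1,2}-A_{2,1})=(t-1)^3$ together with the verification that the resulting pure one-coordinate values sweep out all shifts of $(t-1)^3$. Everything else — the groupoid reduction, the conjugation-invariance of $\Sigma$ and $\sigma$, and the passage from the constraints to the coset representative — is formal.
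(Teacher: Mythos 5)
Your reduction step contains one genuine error, though a repairable one. You invoke ``the conjugation isomorphism $SLH_\Lambda\cong SLH_\Xi$'', justified by $\Sigma(\bar c*g*c)=\Sigma(g)$ for a link homotopy $c\in LH_{\Xi,\Lambda}$. Such a $c$ exists only when $\Lambda$ is link homotopic to $\Xi$, i.e.\ only when $\lk(\Lambda)=0$ (two-component string links are classified up to link homotopy by the linking number); for $\lk(\Lambda)\ne 0$ the set $LH_{\Xi,\Lambda}$ is empty and no conjugating path exists, while the theorem is asserted for all link homotopic pairs. The repair is exactly what \S\ref{action} (and the paper's own proof) supplies: the stacking action $[g]\cdot[h]=[h\#g]$ of $SLH_\Xi$ on $LH_{\Lambda,\Lambda'}$ is defined for every $\Lambda$, and additivity $\Sigma(h\#g)=\Sigma(h)+\Sigma(g)$ gives $\Sigma(b)+\Sigma(SLH_\Xi)\subseteq\Sigma(LH_{\Lambda,\Lambda'})$; together with the constraints (which give the inclusion $\subseteq$ directly, with no coset formalism needed) this is all your argument requires. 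So replace ``conjugation'' by ``stacking'' throughout.

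With that fixed, your proof is correct, and its algebraic core is genuinely different from the paper's. The paper proves (b) first ($\sigma(\frak J\Psi_n^\pm)$ visibly generate $\ker\delta$) and then deduces (a) by lifting along $|\cdot|$: it subtracts a realization of $(|p_+|,|p_-|)$ and realizes the remaining error term, which lies in $\ker\psi$ with $\psi(f,g)=(|f|,|g|,f'+g'|_{t=1})$, by the reflection examples $\Sigma\big(\frak J(\Psi_n^\pm\#\rho\Psi_n^\pm)\big)$ of Example \ref{Wh-refl}. You instead prove (a) directly by filtering $\ker\Delta$ by order of vanishing at $t=1$, staying inside the family $\Psi_{mn}$, and then obtain (b) as a corollary (your parity argument for lifting $\ker\delta$ through $|\cdot|$ is correct, as is the identity $\sigma(\frak J\Psi_{mn})=\big(m^2(t^{|n|}-1),\,-n^2(t^{|m|}-1)\big)$). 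I checked your jet-level step (the image of the coupled jet map on $\ker\Delta$ is indeed generated by $(1,0,-1,0)$ and $(0,2,0,-2)$, realized by $\Sigma(\frak J\Psi)$ and $\Sigma(\frak J\Psi_2^+)-2\Sigma(\frak J\Psi)$) and your deep-part step, which is the crux: with $e_n=A_{1,n}-A_{n,1}$ one has, for all $n\in\Z$,
\[ e_{n+3}-3e_{n+2}+3e_{n+1}-e_n=t^n(t-1)^3, \]
so the elements $(e_n,0)$ really do span $\ker E\oplus 0$. This third-difference identity is what you should write down explicitly: ``shifted analogues of $t(A_{1,2}-A_{2,1})=(t-1)^3$'' is misleading, since the $e_n$ are not shifts of one another. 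As for what each route buys: the paper's exploits the transparent structure of $\ker\delta$ and the geometric meaning of the reflection examples (Jin suspensions of concordance-trivial links), while yours makes (a) self-contained with (b) a formal consequence, at the cost of the more delicate spanning computation above.
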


\begin{proof} $\Sigma(LH_{\Lambda,\Lambda'})\subset\Delta_{\Lambda,\Lambda'}:=
\Delta^{-1}\big(0,0,\,\beta_1(\Lambda')-\beta_1(\Lambda)+\beta_0(\Lambda)-\beta_0(\Lambda'),\,
\beta_1(\Lambda)-\beta_1(\Lambda')\big)$ and $\sigma(LH_{\Lambda,\Lambda'})\subset\delta_{\Lambda,\Lambda'}:=
\delta^{-1}\big(0,0,\,\beta_0(\Lambda)-\beta_0(\Lambda')\big)$ 
by the above constraints.
It remains to prove the reverse inclusions.
We first consider the case $\Lambda=\Lambda'=\Xi$.

Since $\sigma_\pm(h)=|\Sigma_\pm(h)|$, from Example \ref{Wh-links} we have 
$\sigma(\frak J\Psi_n^+)=\big(n^2(t-1),\,1-t^n\big)$ and
$\sigma(\frak J\Psi_n^-)=\big(t^n-1,\,n^2(1-t)\big)$. 
But these clearly generate $\ker\delta$.
(Let us note that $\ker\delta$ consists of pairs $(a_mt^m+\dots+a_0,\, b_nt^n+\dots+b_0)$ where the $a_i$ with $i\ge 2$ 
and the $b_i$ with $i\ge 1$ are arbitrary, and $a_0$, $b_0$ and $a_1$ are fully determined.)
Thus $\sigma(SLH_\Xi)\supset\ker\delta$.

It is easy to see that the composition $\Z[t^{\pm1}]^2\xr{\Delta}\Z^4\xr{P}\Z^3$, where $P(a,b,c,d)=(a,b,c+d)$, coincides 
with $\Z[t^{\pm1}]^2\xr{Q}\Z[t]^2\xr{\delta}\Z^3$, where $Q(f,g)=(|f|,|g|)$.
Consequently, if $(p_+,p_-)\in\ker\Delta$, then $(|p_+|,|p_-|)\in\ker\delta$,
so there exists a self-link-homotopy $h$ of 
$\Xi$ (a suitable combination of the $\frak J\Psi_n^\pm$) such that $|p_\pm|=\sigma_\pm(h)=|\Sigma_\pm(h)|$.
Then $\big(p_+-\Sigma_+(h),p_--\Sigma_-(h)\big)\in\ker\psi$, where
$\psi\:\Z[t^{\pm 1}]\oplus\Z[t^{\pm 1}]\to\Z[t]\oplus\Z[t]\oplus\Z$ is given by $(f,g)\mapsto (|f|,|g|,f'+g'|_{t=1})$.
But it is easy to see that $\ker\psi$ is generated by the values of $\Sigma$ from Example \ref{Wh-refl},
$\Sigma\big(\frak J(\Psi_n^+\#\rho \Psi_n^+)\big)=\big(n(t-t^{-1}),\,t^{-n}-t^n\big)$ and
$\Sigma\big(\frak J(\Psi_n^-\#\rho \Psi_n^-)\big)=\big(t^n-t^{-n},\,n(t^{-1}-t)\big)$.
Thus $\Sigma(SLH_\Xi)\supset\ker\Delta$.

Finally, given any link homotopy $h$ between $\Lambda$ and $\Lambda'$ and any $x\in\Delta_{\Lambda,\Lambda'}$,
we have $x-\Sigma(h)\in\Delta_{\Xi,\Xi}=\ker\Delta$.
By the above, $x-\Sigma(h)=\Sigma(h_\Xi)$ for some self-link-homotopy $h_\Xi$ of $\Xi$.
Now the stacked sum $h\# h_\Xi$ is a link homotopy between $\Lambda$ and $\Lambda'$ such that 
$\Sigma(h\# h_\Xi)=\Sigma(h)+\Sigma(h_\Xi)=x$.
Thus $\Delta_{\Lambda,\Lambda'}\subset\Sigma(LH_{\Lambda,\Lambda'})$. 
The inclusion $\delta_{\Lambda,\Lambda'}\subset\sigma(LH_{\Lambda,\Lambda'})$ is proved similarly.
\end{proof}

Since link map concordances are link homotopic to link homotopies in codimension two and $\sigma$ is invariant under 
link homotopy, $\sigma(LH_{\Lambda,\Lambda'})=\sigma(LC_{\Lambda,\Lambda'})$.
Also it is easy to see that $\sigma(LC_{\Lambda,\Lambda'})=\sigma(LM_{S^2\sqcup S^2\to S^4})$.

\begin{corollary}{\rm (Kirk \cite{Ki})} \label{image-kirk} $\sigma(LM_{S^2\sqcup S^2\to S^4})=\ker\delta$.
\end{corollary}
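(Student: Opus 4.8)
The plan is to reduce the computation to Theorem~\ref{image1}(b) specialized to $\Lambda=\Lambda'=\Xi$. Concretely, I would establish the chain of equalities $\sigma(LM_{S^2\sqcup S^2\to S^4})=\sigma(SLC_\Xi)=\sigma(SLH_\Xi)$ and then read off the last group directly from Theorem~\ref{image1}(b), which at $\Lambda=\Lambda'=\Xi$ gives $\sigma(SLH_\Xi)=\sigma(LH_{\Xi,\Xi})=\delta^{-1}\big(0,0,\beta_0(\Xi)-\beta_0(\Xi)\big)=\delta^{-1}(0,0,0)=\ker\delta$. Note that the self-concordance case is exactly the one in which the right-hand side collapses to the subgroup $\ker\delta$ rather than a nontrivial coset, since $\beta_0(\Xi)-\beta_0(\Xi)=0$.

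For the first equality I would use the ball-removal construction recalled above: any link map $f\:S^2\sqcup S^2\to S^4$ yields, after deleting a suitable $4$-ball $B$ and identifying $\Cl{S^4\but B}$ with a product, a self-link-map-concordance $hf$ of $\Xi$ with $\sigma(f)=\sigma(hf)$. Since $\sigma$ is invariant under link map concordance, the auxiliary choices (the arc $J$, the neighborhood $B$, the identification $h$) do not affect $\sigma(f)$, so $\sigma(LM_{S^2\sqcup S^2\to S^4})\subset\sigma(SLC_\Xi)$. For the reverse inclusion I would cap off: gluing a standard $4$-ball carrying two meridional disks back onto a self-link-map-concordance of $\Xi$ produces an honest link map $S^2\sqcup S^2\to S^4$ with the same value of $\sigma$. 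The second equality is immediate from the fact that in codimension two link map concordances are link homotopic to link homotopies, together with the invariance of $\sigma$ under link homotopy, whence $\sigma(SLC_\Xi)=\sigma(SLH_\Xi)$.

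The substantive content has already been carried out inside Theorem~\ref{image1}, whose proof realizes every element of $\ker\delta$ by explicit self-link-homotopies of $\Xi$ assembled from the Jin suspensions $\frak J\Psi_n^\pm$; the corollary then follows formally. The one step in the corollary proper that deserves care --- and the step I expect to be the main obstacle --- is the capping-off direction of the first equality: one must check that each such self-link-map-concordance of $\Xi$ closes up to a genuine link map of two $2$-spheres into $S^4$ with the two components still disjoint, and that this closing-up leaves $\sigma$ unchanged. This is the point at which the geometry of link maps, rather than the string-link bookkeeping, actually enters; everything else is a formal combination of the identities above with Theorem~\ref{image1}(b).
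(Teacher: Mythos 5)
Your proposal is correct and takes essentially the same route as the paper: the paper likewise deduces the corollary from Theorem~\ref{image1}(b) at $\Lambda=\Lambda'=\Xi$ via the two identities $\sigma(LC_{\Lambda,\Lambda'})=\sigma(LH_{\Lambda,\Lambda'})$ (codimension-two link map concordance implies link homotopy, plus link-homotopy invariance of $\sigma$) and $\sigma(LC_{\Lambda,\Lambda'})=\sigma(LM_{S^2\sqcup S^2\to S^4})$ (ball removal and capping off, which the paper dismisses as ``easy to see''). The only difference is one of emphasis: you flag the capping-off direction as the step needing care, whereas the paper leaves it implicit.
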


\begin{corollary} {\rm ($\supset$: Koschorke \cite{Ko89}, $\subset$: Melikhov--Repov\v s \cite{MR1})} 
\label{koschorke+} $\Sigma(SLH_\Xi)=\ker\Delta$. 
\end{corollary}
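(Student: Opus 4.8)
The plan is to read this off from Theorem \ref{image1}(a) by specializing to $\Lambda=\Lambda'=\Xi$. Since $SLH_\Xi=LH_{\Xi,\Xi}$ by definition, and $\beta_i(\Xi)-\beta_i(\Xi)=0$ for $i=0,1$, all four coordinates in the target of $\Delta$ collapse to $0$, so the theorem yields $\Sigma(SLH_\Xi)=\Delta^{-1}(0,0,0,0)=\ker\Delta$. At the formal level the corollary is therefore immediate; but it is instructive to isolate the two inclusions separately, since the stated attribution reflects that they have quite different sources.

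For the inclusion $\Sigma(SLH_\Xi)\subset\ker\Delta$ I would simply invoke the two constraints on $\Sigma$ derived above. With $\Lambda=\Lambda'=\Xi$ both right-hand sides vanish, so that the first and second derivatives of $\Sigma_+(h)+\Sigma_-(h)$ are zero at $t=1$ for every $h\in SLH_\Xi$; combined with the universal identity $\Sigma(h)|_{t=1}=(0,0)$ (the point of subtracting $1$), these are exactly the four defining conditions of $\ker\Delta$. This is the direction I expect to be routine.

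The substantive part is the reverse inclusion $\ker\Delta\subset\Sigma(SLH_\Xi)$, and here I would follow the two-stage realization already carried out in the proof of Theorem \ref{image1}. Given $(p_+,p_-)\in\ker\Delta$, the first stage is to realize the pair of absolute values $(|p_+|,|p_-|)$, which lies in $\ker\delta$, by a suitable combination of the Jin suspensions $\frak J\Psi_n^\pm$; this relies on the computation $\sigma(\frak J\Psi_n^+)=(n^2(t-1),\,1-t^n)$ and its mirror from Example \ref{Wh-links}, together with the commuting square relating $\Delta$ to $\delta$ through the map $|\cdot|$. Writing $h$ for the resulting self-link-homotopy of $\Xi$, the remaining discrepancy $(p_+-\Sigma_+(h),\,p_--\Sigma_-(h))$ then lies in $\ker\psi$, and the second stage is to cancel it using the reflected examples $\frak J(\Psi_n^\pm\#\rho\Psi_n^\pm)$ from Example \ref{Wh-refl}, whose $\Sigma$-values generate $\ker\psi$.

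The main obstacle is the two generation claims underlying these stages: that the $\sigma(\frak J\Psi_n^\pm)$ span $\ker\delta$, and that the $\Sigma(\frak J(\Psi_n^\pm\#\rho\Psi_n^\pm))$ span $\ker\psi$. Both are finite linear-algebra verifications over $\Z[t^{\pm1}]$ — one determines which low-order coefficients are forced and checks that the listed examples hit every free coefficient — and both rest entirely on the explicit double-point data recorded in Examples \ref{Wh-links} and \ref{Wh-refl}. No geometric input beyond those constructions is needed, so once the two kernels are pinned down the corollary follows.
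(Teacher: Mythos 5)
Your proposal is correct and takes essentially the same route as the paper: the corollary is exactly Theorem \ref{image1}(a) specialized to $\Lambda=\Lambda'=\Xi$ (where the coset collapses to $\ker\Delta$), and your two-stage realization --- first hitting $(|p_+|,|p_-|)\in\ker\delta$ with combinations of the $\frak J\Psi_n^\pm$, then cancelling the discrepancy in $\ker\psi$ with the reflected examples $\frak J(\Psi_n^\pm\#\rho\Psi_n^\pm)$ --- is precisely the argument the paper gives inside the proof of that theorem.
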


\begin{corollary} \label{realization1}
If $\Lambda$ and $\Lambda'$ are link homotopic 2-component string links and $\beta_i(\Lambda)=\beta_i(\Lambda')$ 
for $i=0,1$, then $\Lambda$ and $\Lambda'$ are joined by a link homotopy $h$ such that $\Sigma(h)=0$.
\end{corollary}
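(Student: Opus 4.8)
The plan is to obtain this as an immediate consequence of Theorem \ref{image1}(a). That theorem already identifies the full set $\Sigma(LH_{\Lambda,\Lambda'})$ with an explicit coset, namely the preimage under the homomorphism $\Delta$ of a tuple whose entries are built from the invariants $\beta_0,\beta_1$ of $\Lambda$ and $\Lambda'$. The assertion to be proved is exactly that $0$ belongs to this image, i.e.\ that some link homotopy $h$ between $\Lambda$ and $\Lambda'$ satisfies $\Sigma(h)=0$. So the whole task reduces to checking a condition on that tuple.

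First I would record that the hypothesis $\beta_i(\Lambda)=\beta_i(\Lambda')$ for $i=0,1$ forces every entry of the target tuple to vanish. Indeed, $\beta_1(\Lambda)-\beta_1(\Lambda')=0$ gives the fourth entry, and the same two equalities give $\beta_1(\Lambda')-\beta_1(\Lambda)+\beta_0(\Lambda)-\beta_0(\Lambda')=0$ for the third. Hence Theorem \ref{image1}(a) specializes to $\Sigma(LH_{\Lambda,\Lambda'})=\Delta^{-1}(0,0,0,0)=\ker\Delta$. Since $\Delta$ is a group homomorphism we have $0\in\ker\Delta$, so $0\in\Sigma(LH_{\Lambda,\Lambda'})$, and by definition of the image this means precisely that there is a link homotopy $h$ between $\Lambda$ and $\Lambda'$ with $\Sigma(h)=0$. (Note that $\Lambda$ and $\Lambda'$ being link homotopic is exactly what guarantees $LH_{\Lambda,\Lambda'}\ne\emptyset$, so that Theorem \ref{image1}(a) applies.)

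I do not expect any genuine obstacle internal to this corollary: all the substance is already packaged into Theorem \ref{image1}(a), whose proof rests on the computations of $\Sigma$ on the Jin suspensions $\frak J\Psi_n^\pm$ and on the constraints coming from the generalized Sato--Levine invariant. The only thing that needs verification here is the bookkeeping observation that the two hypotheses on the $\beta_i$ are exactly the two linear conditions cutting $0$ out of the coset $\Sigma(LH_{\Lambda,\Lambda'})$, which is the elementary check carried out above.
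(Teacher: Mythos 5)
Your proposal is correct and is exactly the argument the paper intends: Corollary \ref{realization1} is stated as an immediate consequence of Theorem \ref{image1}(a), where the hypotheses $\beta_i(\Lambda)=\beta_i(\Lambda')$ for $i=0,1$ make the target tuple $(0,0,0,0)$, so that $\Sigma(LH_{\Lambda,\Lambda'})=\ker\Delta\ni 0$. Your bookkeeping check, including the remark that link homotopy of $\Lambda$ and $\Lambda'$ ensures $LH_{\Lambda,\Lambda'}\ne\emptyset$, is precisely what is needed.
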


\section{Algebra: links}

\subsection{Kirk--Livingston invariant}
Let $H=H_+\sqcup H_-\:S^1\x I\sqcup S^1\x I\to S^3\x I$ be a generic link homotopy between links 
$L,L'\:S^1\sqcup S^1\emb S^3$.
Similarly to the above we may define its $\Sigma$-invariant, the only difference being that for each 
$z=H_+(x,t)=H_+(y,t)$ in the double point set $\Delta(H_+)$ of $H_+$ there are now two ways of choosing an arc 
$J_z\subset S^1$ between $x$ and $y$, which are equally natural and distinct up to homotopy.
However, the orientation of $J_z$ can still be induced from a fixed orientation of $S^1$.
This leads to two possible values $l_z$ and $l'_z$ of the linking number between $H_+(J_z\x\{t\})$ and $h_-(S^1\x\{t\})$
in $\R^3\x\{t\}$.
Their sum is $\lambda:=\lk(L)=\lk(L')$.
In particular, when $\lambda=0$ we have $|l_z|=|l'_z|$ and it is natural to define 
\[\Sigma_+(H)=\sum_{z\in\Delta(H_+)}\eps_z (t^{|l_z|}-1)\in\Z[t].\]
In general, we have $|l_z-\frac\lambda2|=|l_z'-\frac\lambda2|$, and we define 
$\Sigma(H)=\big(\Sigma_+(H),\Sigma_-(H)\big)$, where
\[\Sigma_+(H)=\sum_{z\in\Delta(H_+)}\eps_z (t^{|l_z-\frac\lambda2|-|\frac\lambda2|}-1)\in 
t^{-\lfloor|\frac\lambda2|\rfloor}\Z[t],\]
and $\Sigma_-(H)$ is defined similarly, by interchanging the roles of $H_+$ and $H_-$.
The point of subtracting $\frac{|\lambda|}2$ is to make the exponents integer in all cases (rather than
half-integer) and to keep killing the free term, rather than some other coefficient.
We have 
\[\left|x-\tfrac\lambda2\right|-\left|\tfrac\lambda2\right|=\begin{cases}
x&\text{ if }\lambda\le 0\text{ and }x\ge\lambda/2;\\
\lambda-x&\text{ if }\lambda\le 0\text{ and }x\le\lambda/2;\\
x-\lambda&\text{ if }\lambda\ge 0\text{ and }x\ge\lambda/2;\\
-x&\text{ if }\lambda\ge 0\text{ and }x\le\lambda/2.
\end{cases}\]
Let us also note that $|l_z-\frac\lambda2|-|\frac\lambda2|=\max(\epsilon l_z,\epsilon l'_z)$,
where $\epsilon=-\sign \lambda$ if $\lambda\ne 0$ and $\epsilon=\pm 1$ (chosen arbitrarily) if $\lambda=0$.
Clearly, $\Sigma(H)|_{t=1}=(0,0)$, and $\Sigma(H)$ is invariant under fiberwise link 
homotopy of $H$, and consequently is well-defined for an arbitrary (not necessarily generic) link homotopy $H$ 
between links.

\begin{remark}
Let us note that if $H$ were assumed to be only a link map concordance, then for each double point 
$H_+(x,t)=H_+(y,s)=z$ there would have been $\pi_1(S^1\x I)\simeq\Z$ of equally natural ways of choosing 
an arc $J\subset S^1\x I$ between $(x,t)$ and $(y,s)$, as long as the ordering of $(x,y)$ is fixed.
In this case only the ``absolute value'' of the $\bmod\lambda$ residue class of $l_z$ is well-defined, as 
an element of $\{0,\dots,\lfloor\lambda/2\rfloor\}$ if $\lambda\ne 0$ or as a non-negative integer if $\lambda=0$.
\end{remark}

\begin{example} \label{Wh-twists} 
As before, we fix some string link $\Lambda_k$ with $\lk(\Lambda_k)=k$ (but now not necessarily
with $\beta(\cl\Lambda_k)=0$) and write $\cl_k\Lambda=\cl(\Lambda\#\Lambda_k)$.

The operation $\cl_k$ can also be applied to a link homotopy $h$ of string links in the obvious way.
Clearly, $\Sigma_\pm(\cl_k h)=|\Sigma_\pm(h)|_k$, where
$|\cdot|_0\:\Z[t^{\pm 1}]\to\Z[t]$ (previously denoted by $|\cdot|$) is given by $t^n\mapsto t^{|n|}$, 
and in general $|\cdot|_k\:\Z[t^{\pm 1}]\to t^{-\lfloor|\frac k2|\rfloor}\Z[t]$ is given by
$t^n\mapsto t^{|n-\frac k2|-|\frac k2|}$.

Thus we have

\[\Sigma(\cl_k\frak J\Psi_n^+)=
\begin{cases}
\big(\tfrac{n^2}2(t+t^{-1}-2)+\tfrac{n}2(t-t^{-1}),\,1-t^n\big)& \text{ if }k\le-2\text{ and }n\ge\frac k2\\
\big(\tfrac{n^2+n}2(t-1),\,1-t^n\big)& \text{ if }k=-1\text{ and }n\ge 0\\
\big(n^2(t-1),\,1-t^n\big)& \text{ if }k=0\\
\big(\tfrac{n^2-n}2(t-1),\,1-t^{-n}\big)& \text{ if }k=1\text{ and }n\le 0\\
\big(\tfrac{n^2}2(t+t^{-1}-2)-\tfrac{n}2(t-t^{-1}),\,1-t^{-n}\big)& \text{ if }k\ge2\text{ and }n\le\frac k2\\
\end{cases}\]

Also,
\[\Sigma(\cl_k\frak J\Psi_{k-n}^+)=
\begin{cases}
\big(\tfrac{(k-n)^2}2(t+t^{-1}-2)+\tfrac{k-n}2(t-t^{-1}),\,1-t^n\big)& \text{ if }k\le-2\text{ and }n\ge\frac k2\\
\big(\tfrac{(k-n)^2}2(t+t^{-1}-2)-\tfrac{k-n}2(t-t^{-1}),\,1-t^{-n}\big)& \text{ if }k\ge2\text{ and }n\le\frac k2.\\
\end{cases}\]
\end{example}

\begin{example} 
Given two link homotopies $h$, $h'$ between string links, $\cl_k(h\# h')$ does not seem to be easily
obtainable from $\cl_k h$ and $\cl_k h'$.
Nevertheless, $\Sigma_\pm\big(\cl_k(h\# h')\big)=|\Sigma_\pm(h\#h')|_k=
|\Sigma_\pm(h)|_k+|\Sigma_\pm(h')|_k=\Sigma_\pm(\cl_k h)+\Sigma_\pm(\cl_k h')$.

In particular, if either $k\le-2$ and $n\ge\frac k2$ or $k\ge2$ and $n\le\frac k2$, then
\[\Sigma\big(\cl_k(\bar{\frak J}\Psi_n^+\#\frak J\Psi_{k-n}^+)\big)=
\big(\tfrac{k(k-2n)}2(t+t^{-1}-2)+\epsilon\tfrac{k-2n}2(t-t^{-1}),\ 0\big),\]
where $\epsilon=-\sign k$.

If $k$ is odd, $k\ne\pm 1$, then in the case $k=2n-\epsilon$ we get 
\[\Sigma\big(\cl_k(\bar{\frak J}\Psi_{(k-1)/2}^+\#\frak J\Psi_{(k+1)/2}^+)\big)=
\big(\tfrac k2(t+t^{-1}-2)+\tfrac{\epsilon}2(t-t^{-1}),\ 0\big).\]
If $k$ is even, $k\ne 0$, then in the case $k=2n-2\epsilon$ we get
\[\Sigma\big(\cl_k(\bar{\frak J}\Psi_{(k-2)/2}^+\#\frak J\Psi_{(k+2)/2}^+)\big)=
\big(k(t+t^{-1}-2)+\epsilon(t-t^{-1}),\ 0\big).\]
\end{example}

\begin{example}
If $k$ is odd, $k\ne\pm 1$, let $h_n$ denote the stacked sum of $n$ copies of 
$\frak J\Psi_{(k-1)/2}^+\#\bar{\frak J}\Psi_{(k+1)/2}^+$.
Then, if either $k\le-2$ and $n\ge\frac k2$ or $k\ge2$ and $n\le\frac k2$,
\[\Sigma\big(\cl_k(\frak J\Psi_n^+\# h_n)\big)=
\big(\tfrac{n(n-k)}2(t+t^{-1}-2),\,1-t^{\epsilon n}\big).\]

If $k$ is even, $k\ne 0$, let $h_m$ be the stacked sum of $m$ copies of 
$\frak J\Psi_{(k-2)/2}^+\#\bar{\frak J}\Psi_{(k+2)/2}^+$.
Then, if either $k\le-2$ and $n\ge\frac k2$ or $k\ge2$ and $n\le\frac k2$
\[\Sigma\big(\cl_k(\frak J\Psi_n^+\# h_{(\epsilon n(k-n)+n)/2})\big)=
\Big(n(n-k)\big(t-1-\tfrac{|k|}2(t+t^{-1}-2)\big),\,1-t^{\epsilon n}\Big).\]
\end{example}

\subsection{Constraints}

For a generic link homotopy $H=H_+\sqcup H_-$ between links $L,L'\:S^1\sqcup S^1\emb S^3$,
\[\beta(L')-\beta(L)=\sum_{z\in\Delta(H_+)}\eps_z l_z l'_z+\sum_{z\in\Delta(H_-)}\eps_z l_z l'_z.\]
Taking into account that $l_zl'_z=l_z(\lambda-l_z)=-\epsilon l_z(\epsilon l_z+|\lambda|)$, where $l_z$ and $l'_z$
are interchangeable, this yields a constraint on $\Sigma$:
\[\tfrac{d}{dt}\big(t^{1+|\lambda|}\tfrac{d}{dt}\Sigma_+(H)\big)+
\tfrac{d}{dt}\big(t^{1+|\lambda|}\tfrac{d}{dt}\Sigma_-(H)\big)=\beta(L)-\beta(L').\]

It turns out that this constraint suffices to determine the image of $\Sigma$:

\begin{theorem} \label{image2} Let $L$ and $L'$ be two-component links with $\lk(L)=\lk(L')=\lambda$.
Then
\[\Sigma(LH_{L,L'})=\Delta_\lambda^{-1}\big(0,0,\,\beta(L)-\beta(L')\big),\] 
where $\Delta_\lambda\:
t^{-\lfloor|\frac\lambda2|\rfloor}\Z[t]\oplus t^{-\lfloor|\frac\lambda2|\rfloor}\Z[t]\to\Z\oplus\Z\oplus\Z$ 
is the group homomorphism given by 
\[(f,g)\mapsto \big(f|_{t=1},g|_{t=1},\,(1+|\lambda|)(f'+g')+f''+g''|_{t=1}\big).\]
\end{theorem}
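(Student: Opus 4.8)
The plan is to mirror the architecture of the proof of Theorem \ref{image1}, adapted to links and to the closure operations $\cl_\lambda$. The inclusion $\Sigma(LH_{L,L'})\subseteq\Delta_\lambda^{-1}\big(0,0,\beta(L)-\beta(L')\big)$ is immediate: each summand $t^{m_z}-1$ of $\Sigma_\pm(H)$ vanishes at $t=1$, which gives the first two coordinates, while the constraint established just before the theorem, evaluated at $t=1$, gives the third. Indeed that constraint is exactly the third coordinate of $\Delta_\lambda$ read off on $\Sigma(H)$, since $\frac{d}{dt}\big(t^{1+|\lambda|}\frac{d}{dt}(t^m-1)\big)\big|_{t=1}=m(m+|\lambda|)=(1+|\lambda|)m+m(m-1)$. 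So the content is the reverse inclusion, which I would reduce to the self-homotopy statement $\Sigma(SLH_L)=\ker\Delta_\lambda$.

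The reduction goes as follows. Since $\lk(L)=\lk(L')$, the links are link homotopic, so $LH_{L,L'}\ne\emptyset$; fixing $H_0\in LH_{L,L'}$ and any target $x\in\Delta_\lambda^{-1}\big(0,0,\beta(L)-\beta(L')\big)$, we have $x-\Sigma(H_0)\in\ker\Delta_\lambda$, and if this difference equals $\Sigma(g)$ for some $g\in SLH_L$, then $\Sigma(g*H_0)=x$ because $\Sigma$ is a groupoid morphism. Moreover $\Sigma(SLH_L)$ is independent of $L$: for a connecting homotopy $b\in LH_{L,L'}$ the conjugation isomorphism $SLH_L\to SLH_{L'}$, $[g]\mapsto[\bar b*g*b]$, satisfies $\Sigma(\bar b*g*b)=\Sigma(g)$. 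Hence it suffices to compute $\Sigma(SLH_{L_0})$ for the single base link $L_0=\cl_\lambda\Xi$, whose components are unknotted and whose linking number is $\lambda$; the inclusion $\Sigma(SLH_{L_0})\subseteq\ker\Delta_\lambda$ is the already noted constraint with $L=L'$.

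Self-link-homotopies of $L_0$ arise as closures $\cl_\lambda h$ of self-link-homotopies $h$ of $\Xi$, with $\Sigma_\pm(\cl_\lambda h)=|\Sigma_\pm(h)|_\lambda$, so the explicit computations of Example \ref{Wh-twists} and the examples following it feed directly into $\ker\Delta_\lambda$. From those I would assemble two families of generators: the closures $v_n$ of $\frak J\Psi_n^+$ (corrected by the auxiliary homotopies $h_n$) whose $\Sigma$ has second coordinate running through a basis $\{1-t^{\epsilon n}\}$ of $\{g:g(1)=0\}$ and first coordinate supported on $t+t^{-1}-2$ when $\lambda$ is odd, or on $w:=t-1-\tfrac{|\lambda|}2(t+t^{-1}-2)$ when $\lambda$ is even, together with the slot-swapped family $v_n'$ obtained by interchanging the two components (equivalently, using $\Psi_n^-$ in place of $\Psi_n^+$). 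Given these, the generation argument is short. Using the $v_n$ to cancel the second coordinate of an arbitrary element of $\ker\Delta_\lambda$ lands us in $K_1\times\{0\}$, where $K_1=\ker\big(f\mapsto(1+|\lambda|)f'(1)+f''(1)\big)$ inside $\{f:f(1)=0\}$; then writing the first coordinate in the basis $\{1-t^{\epsilon n}\}$ and subtracting the matching combination of $v_n'$ leaves an element of the shape $(0,c\,p)$, where $p$ is the relevant first-coordinate generator. Its membership in $\ker\Delta_\lambda$ forces $c=0$, since $p$ has nonzero image under the third-coordinate functional ($\tfrac{d^2}{dt^2}(t+t^{-1}-2)|_{t=1}=2$ in the odd case, and the value $1$ for $w$ in the even case). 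This shows $\langle v_n,v_n'\rangle=\ker\Delta_\lambda$.

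The main obstacle I anticipate is the dependence of everything on $\lambda$. The integrality of the coefficient $\tfrac{n(n-k)}2$ and the very ring $t^{-\lfloor|\lambda/2|\rfloor}\Z[t]$ change with the parity of $\lambda$, which is precisely why the examples split into an even and an odd case and why the first-coordinate generators are supported on $t+t^{-1}-2$ versus $w$; the fiddly bookkeeping is tracking which $n$ are admissible (the ranges $n\le\lambda/2$ or $n\ge\lambda/2$) and checking that the resulting exponents $\epsilon n$ sweep out a full basis of $\{g:g(1)=0\}$. Finally, the small linking numbers $\lambda=0,\pm1$, where $\lfloor|\lambda/2|\rfloor=0$ and the monomial $t+t^{-1}-2$ leaves the ring $\Z[t]$, must be handled separately: for $\lambda=0$ the assertion coincides with the $\Z[t]$-valued computation already carried out in Theorem \ref{image1}(b), and for $\lambda=\pm1$ one uses the $k=\pm1$ rows of Example \ref{Wh-twists} directly.
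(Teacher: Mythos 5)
Your proposal is correct and follows essentially the same route as the paper: the forward inclusion comes from the same constraint, and the reverse inclusion rests on the same generators — the closures of Jin suspensions from Example \ref{Wh-twists} and the two examples following it — with your explicit elimination argument (kill the second coordinate, then the first, then use that $(1+|\lambda|)p'+p''|_{t=1}$ equals $2$ or $1$ on the residual generator) being a fleshed-out version of the paper's parity observation and its ``it is easy to see'' step. The only cosmetic difference is the transfer from the base link $\cl_\lambda\Xi$ to $L$: the paper chooses the closing string link $\Lambda_\lambda$ so that $\cl\Lambda_\lambda=L$, making $\cl_\lambda h_\Xi$ a self-link-homotopy of $L$ itself, whereas you conjugate by a connecting link homotopy $b$ and use $\Sigma(\bar b*g*b)=\Sigma(g)$; the two devices are interchangeable.
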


\begin{proof}
$\Sigma(LH_{L,L'})$ is contained in the specified coset $\Delta_{L,L'}$ of $\ker\Delta_\lambda$ by the above constraint.
It remains to prove the reverse inclusion.

If $\lambda$ is odd, then $(1+|\lambda|)f'+f''|_{t=1}$ is even for every
$f\in t^{-\lfloor|\frac\lambda2|\rfloor}\Z[t]$, and there exists an 
$f\in t^{-\lfloor|\frac\lambda2|\rfloor}\Z[t]$ such that $(1+|\lambda|)f'+f''|_{t=1}=2$, namely,
$f(t)=t$ if $\lambda=\pm 1$ and $f(t)=t+t^{-1}$ if $\lambda\ne\pm 1$. 
If $\lambda$ is even, then there exists an $f\in t^{-\lfloor|\frac\lambda2|\rfloor}\Z[t]$ such that 
$(1+|\lambda|)f'+f''|_{t=1}=1$, namely, $f(t)=t$ if $\lambda=0$ and $f(t)=t-1+\frac{|\lambda|}2(t+t^{-1}-2)$
if $\lambda\ne 0$.
Using this observation, it is easy to see that the values of $\Sigma$ listed in Example \ref{Wh-twists} 
generate $\ker\Delta_\lambda$.
Thus $\Sigma(SLH_\Xi)\supset\ker\Delta_\lambda$.

Now given any link homotopy $H$ between $L$ and $L'$ and any $x\in\Delta_{L,L'}$, 
we have $x-\Sigma(H)\in\ker\Delta_\lambda$.
Hence by the above there exists a self-link-homotopy $h_\Xi$ of $\Xi$
such that $\Sigma(\cl_\lambda h_\Xi)=x-\Sigma(H)$.
Here $\cl_\lambda$ has been defined using an arbitrary string link $\Lambda_\lambda$ of linking number $\lambda$.
In particular, we may choose $\Lambda_\lambda$ so that $L=\cl(\Lambda_\lambda)$.
Then $\cl_\lambda(h_\Xi)$ is a self-link-homotopy of $L$.
By combining it with $H$ we obtain a link homotopy $H'$ from $L$ to $L'$ such that 
$\Sigma(H')=\Sigma(H)+\Sigma(\cl_\lambda h_\Xi)=x$.
\end{proof}

\begin{corollary} \label{kltheorem} \cite{KL}
$\Sigma(SLH_L)=\ker\Delta_{\lk(L)}$.
\end{corollary}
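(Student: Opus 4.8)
The plan is to obtain this as an immediate specialization of Theorem \ref{image2}. Recall that by definition $SLH_L = LH_{L,L}$, the group of fiberwise link homotopy classes of self-link-homotopies of $L$. Thus it suffices to set $L' = L$ in Theorem \ref{image2} and to read off the resulting image.

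First I would check that the hypotheses of Theorem \ref{image2} hold with $L' = L$: the links $L$ and $L$ are trivially two-component links sharing the common linking number $\lambda := \lk(L)$, so the theorem applies verbatim. Next I would substitute $L' = L$ into the description $\Sigma(LH_{L,L'}) = \Delta_\lambda^{-1}\big(0,0,\,\beta(L)-\beta(L')\big)$. The only input from the generalized Sato--Levine invariant is the third coordinate $\beta(L)-\beta(L')$, which collapses to $\beta(L)-\beta(L) = 0$. Hence $\Sigma(SLH_L) = \Delta_\lambda^{-1}(0,0,0) = \ker\Delta_\lambda = \ker\Delta_{\lk(L)}$, as claimed.

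There is essentially no obstacle here: all the content of the statement already resides in Theorem \ref{image2}, whose genuinely hard part is the reverse inclusion $\ker\Delta_\lambda \subset \Sigma(SLH_\Xi)$, realized through the values of $\Sigma$ tabulated in Example \ref{Wh-twists} together with the stacking construction. The corollary merely records the diagonal case $L = L'$, in which the coset $\Delta_{L,L'}$ of $\ker\Delta_\lambda$ recenters at the origin and therefore coincides with the subgroup $\ker\Delta_\lambda$ itself.
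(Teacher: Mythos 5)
Your proposal is correct and coincides with the paper's own derivation: Corollary \ref{kltheorem} is stated without a separate proof precisely because it is the specialization $L'=L$ of Theorem \ref{image2}, where the coset $\Delta_\lambda^{-1}\big(0,0,\,\beta(L)-\beta(L)\big)$ collapses to $\ker\Delta_{\lk(L)}$. Nothing further is needed.
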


This result is stated at the end of the long Remark in \cite{KL}*{\S4}, which is apparently due to the anonymous 
referee of \cite{KL}.
However, the proof of this result is omitted (``In fact, a simple algebraic argument as in the proof of Lemma 4.2 shows 
that the image of $\Phi$ equals the kernel of $\Psi$'').
In this connection let us note that the above proof of Corollary \ref{kltheorem} can well 
be called ``simple'' from the conceptual viewpoint --- but perhaps not from the technical one.

\begin{corollary} \label{realization2}
If $\lk(L)=\lk(L')$ and $\beta(L)=\beta(L)$, then $L$ and $L'$ are
joined by a link homotopy $H$ such that $\Sigma(H)=0$.
\end{corollary}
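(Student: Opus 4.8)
The plan is to read this off directly from Theorem \ref{image2}, with no geometry beyond what that theorem already provides. First I would record that the hypothesis $\beta(L)=\beta(L')$ (this is clearly the intended reading of the statement, since $\beta(L)=\beta(L)$ is vacuous) forces the third coordinate in the target of $\Delta_\lambda$ to vanish, namely $\beta(L)-\beta(L')=0$. Plugging this into Theorem \ref{image2} gives
\[\Sigma(LH_{L,L'})=\Delta_\lambda^{-1}(0,0,0)=\ker\Delta_\lambda,\]
where $\Delta_\lambda$ is the homomorphism $t^{-\lfloor|\frac\lambda2|\rfloor}\Z[t]\oplus t^{-\lfloor|\frac\lambda2|\rfloor}\Z[t]\to\Z\oplus\Z\oplus\Z$ from the statement of that theorem and $\lambda=\lk(L)=\lk(L')$.

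Next I would simply note that, since $\Delta_\lambda$ is a group homomorphism, the zero pair $(0,0)$ satisfies $\Delta_\lambda(0,0)=(0,0,0)$ and hence lies in $\ker\Delta_\lambda$. Combined with the previous display, this yields $0\in\Sigma(LH_{L,L'})$. By the very definition of $\Sigma$ on the set $LH_{L,L'}$ of fiberwise link homotopy classes of link homotopies from $L$ to $L'$, this means precisely that some link homotopy $H$ from $L$ to $L'$ satisfies $\Sigma(H)=0$, which is the desired conclusion.

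The only point that genuinely requires justification --- and so is the closest thing to an obstacle here --- is that $LH_{L,L'}$ be nonempty, i.e.\ that $L$ and $L'$ be link homotopic in the first place; otherwise the coset in Theorem \ref{image2} would be empty and the argument vacuous. This is guaranteed by the classical fact that two two-component links with the same linking number are link homotopic, which is exactly the input already used inside the proof of Theorem \ref{image2} (that proof opens by choosing an arbitrary link homotopy $H$ between $L$ and $L'$). Granting this, the corollary is a purely formal specialization of Theorem \ref{image2} to the case $\beta(L)=\beta(L')$, exploiting nothing more than membership of $0$ in the kernel of a homomorphism.
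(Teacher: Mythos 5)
Your proposal is correct and matches the paper's intent exactly: the paper states Corollary \ref{realization2} without proof precisely because it is the formal specialization of Theorem \ref{image2} to the case $\beta(L)=\beta(L')$, where the coset becomes $\ker\Delta_\lambda\ni(0,0)$. Your added remark that $LH_{L,L'}\ne\emptyset$ because equal linking numbers imply link homotopy (Milnor) is the same input the paper uses implicitly in the proof of Theorem \ref{image2}.
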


\section{Geometry} \label{geometry}

\subsection{Whitney disks for singular links} 

A singular tangle $f\:N\to M$ will be called a {\it $(-\frac12)$-quasi-tangle} if it has precisely two double points
$z=f(x)=f(y)$ and $z'=f(x')=f(y')$ and there exist disjoint arcs $J_+,J_-\subset N$ such that 
$f(\partial J_+)=f(\partial J_-)=\{z,z'\}$ and the loop given by the concatenation of paths $f|_{J_+}$ and $f|_{J_-}$ 
is null-homotopic in $M$.
Writing $B(x,y)$ for the unit ball in $\R^2$ centered at $(x,y)$, such a null-homotopy can be thought of as a generic 
smooth map $W$ of the two-cornered smooth disk $D:=B(0,0)\cap B(1,1)$ into $M$ sending the lower half-boundary 
$\partial_-D:=D\cap\partial B(0,0)$ homeomorphically onto $f(J_-)$ and the upper half-boundary 
$\partial_+ D:=D\cap\partial B(1,1)$ homeomorphically onto $f(J_+)$.
Since $D$ is orientable, pairs of Whitney umbrellas in $W$ can be cancelled along double point curves, and thus 
$W$ can be chosen to be a smooth immersion.
Such an immersion will be called a {\it Whitney disk} pairing up the double points $z$, $z'$ of 
the $(-\frac12)$-quasi-tangle $f$.

A singular tangle $f\:N\to M$ will be called a {\it $\frac12$-quasi-tangle} if $N=K\sqcup L$, where $K$ is connected,
$f|_L$ is an embedding, and $f|_K\:K\to M\but f(L)$ is a $(-\frac12)$-quasi-tangle.

By a {\it normal homotopy} we mean a proper smooth homotopy of a compact 1-manifold $N$ in a 3-manifold that keeps 
$\partial N$ fixed and has finitely many double points, all of which are normal (see \S\ref{basic2}).
Let us note that it may have double points occurring at the same moment.

A {\it $\pm\frac12$-quasi-isotopy} is a normal homotopy $h_t\:N\to M$ whose every 
instant $h_s$ is either a tangle or a $\pm\frac12$-quasi-tangle whose two double points have opposite signs 
(as normal double points of the homotopy).
The terminology is motivated by \cite{MR1}, where $n$-quasi-isotopy is defined for $n=0,1,2,\dots$.

\begin{lemma} \label{whitney1}
Two tangles are $(-\frac12)$-quasi-isotopic if and only if they are $C_2$-equivalent.
\end{lemma}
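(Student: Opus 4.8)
First I would establish the easy direction, that a $C_2$-move can be realized by a $(-\frac12)$-quasi-isotopy. Recall that a $C_2$-move (equivalently a $\Delta$-move) is supported in a small ball and replaces one local tangle picture by another. The plan is to exhibit, inside that ball, an explicit homotopy $h_t$ that passes through exactly two double points, occurring at the \emph{same} instant and with \emph{opposite} signs, such that the intermediate singular tangle is a $(-\frac12)$-quasi-tangle. Concretely, a $\Delta$-move can be performed by pushing one strand across another so that two crossing changes happen simultaneously; the two resulting double points $z,z'$ are then naturally paired by a Whitney disk (the little triangle of the $\Delta$-move is precisely the image of the two-cornered disk $D$), and the two complementary arcs $J_+,J_-$ between $z$ and $z'$ bound it. Since the $\Delta$-move is local and its two double points have opposite signs by construction, this $h_t$ is a $(-\frac12)$-quasi-isotopy. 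So $C_2$-equivalence implies $(-\frac12)$-quasi-isotopy.

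For the converse, suppose $f$ and $f'$ are joined by a $(-\frac12)$-quasi-isotopy $h_t$. I would decompose $h_t$ into elementary pieces: segments on which $h_t$ is a genuine tangle, and the finitely many instants at which it passes through a $(-\frac12)$-quasi-tangle with two oppositely-signed double points. On a tangle segment, $h_t$ is an ambient isotopy and so induces no change up to $C_2$-equivalence. At each singular instant I would use the Whitney disk $W$ pairing up the two double points $z,z'$. The effect of crossing through these two simultaneous double points is a ``finger move along $W$,'' and the key computation is to show that this equals a single $C_2$-move when $W$ is embedded and has interior disjoint from $f(N)$, and equals a product of $C_2$-moves (one per transverse intersection) in general. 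This is exactly the mechanism already used in the introduction's sketch, where a Whitney disk meeting its own component in $n$ points is resolved into $n$ self $C_2$-moves; here there is no ``own component'' constraint, so the argument is cleaner.

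\textbf{The main obstacle} will be the last step: controlling the interior intersections of the immersed Whitney disk $W$ with $f(N)$. The disk $W$ is only guaranteed to be an immersion with boundary on $f(J_+)\cup f(J_-)$, and its interior may puncture $f(N)$ transversely in finitely many points. I expect the heart of the proof to be a lemma asserting that each such transverse interior intersection contributes exactly one $C_2$-move, and that the clasp/finger move across a clean Whitney disk \emph{is} a $C_2$-move (this is the standard identification of the $\Delta$-move with a Whitney-type move, and is where the definition of $D$ as a two-cornered disk is used). I would verify the clean case by an explicit local model --- exhibiting the standard $\Delta$-move picture as the regular neighborhood of a clean two-cornered Whitney disk --- and then handle interior punctures by a tubing/pushing argument that trades each puncture for one extra localized $\Delta$-move, thereby expressing the whole singular instant as a finite sequence of $C_2$-moves. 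Summing over all singular instants of $h_t$ then yields the desired $C_2$-equivalence of $f$ and $f'$.
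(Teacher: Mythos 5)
Your overall architecture is the same as the paper's: realize a $C_2$-move by two simultaneous crossing changes of opposite sign paired by a Whitney disk, and conversely resolve each singular instant of the $(-\frac12)$-quasi-isotopy into $C_2$-moves governed by the interior intersections of the Whitney disk. But your key local identification is wrong, and it is exactly the step you call the heart of the proof. A $C_2$-move involves \emph{three} strands, and the Whitney disk pairing its two double points is an embedded disk whose interior transversally meets the third strand $J_3$ in exactly \emph{one} point; this is how the paper sets up both directions (``These two double points can be paired up by a small embedded Whitney disk, which transversally intersects $J_3$ in one point''). By contrast, if the Whitney disk is embedded and its interior is \emph{disjoint} from $f(N)$, then passing through the two simultaneous crossings changes nothing: by the Whitney trick, the tangles just before and just after the singular instant are ambient isotopic, so zero $C_2$-moves are needed. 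Hence your ``clean case'' lemma --- that the finger move across a clean Whitney disk \emph{is} a $C_2$-move, to be verified by ``exhibiting the standard $\Delta$-move picture as the regular neighborhood of a clean two-cornered Whitney disk'' --- is false and the verification cannot succeed: the $\Delta$-move's local picture is the regular neighborhood of a \emph{once-punctured} disk, and its before/after tangles are not isotopic rel boundary in that ball (they differ by a Borromean-type clasp), whereas in the clean model they are. Note also that this claim contradicts your own, correct, counting rule ``one $C_2$-move per transverse intersection,'' which at zero intersections gives zero moves, not one.

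Even after the base case is corrected, the part you compress into ``a tubing/pushing argument that trades each puncture for one extra localized $\Delta$-move'' is where the paper's real work lies, and it is not a tubing argument: the paper parametrizes the disk as $Q(I\times I)$, chooses $0=t_0<\dots<t_n=1$ so that each strip $[t_i,t_{i+1}]\times I$ is embedded and meets $h_s(N)$ in a single interior point, localizes the homotopy near the two double points, and then replaces the two original simultaneous crossings by $2n$ crossings, inserting $n-1$ canceling pairs of crossing changes along the segments $t_i\times I$. Consecutive pairs of simultaneous crossings are then paired by the once-punctured strips, i.e.\ each pair is precisely a $C_2$-move, giving $n$ moves in all. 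With the correct local model (one interior puncture $=$ one $C_2$-move, clean disk $=$ isotopy), your outline becomes essentially this argument; as written, however, it rests on a lemma that fails.
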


By a theorem of Matveev \cite{Matv} and Murakami--Nakanishi \cite{MN}, two links are $C_2$-equivalent
if and only if they have the same linking numbers of the corresponding $2$-component sublinks.
However, $C_2$-equivalence for tangles in link complements is more interesting.

\begin{proof} Let $J_1$, $J_2$ and $J_3$ be the $3$ strands of the given $C_2$-move (in any order).
The $C_2$-move has the same effect as a certain homotopy whose only intersection points are two normal double points, 
each between $J_1$ and $J_2$, which occur simultaneously in time and have opposite signs (see Figure \ref{simdelta}).
These two double points can be paired up by a small embedded Whitney disk, which transversally intersects $J_3$ in 
one point.

\begin{figure}[h]
\includegraphics[width=10cm]{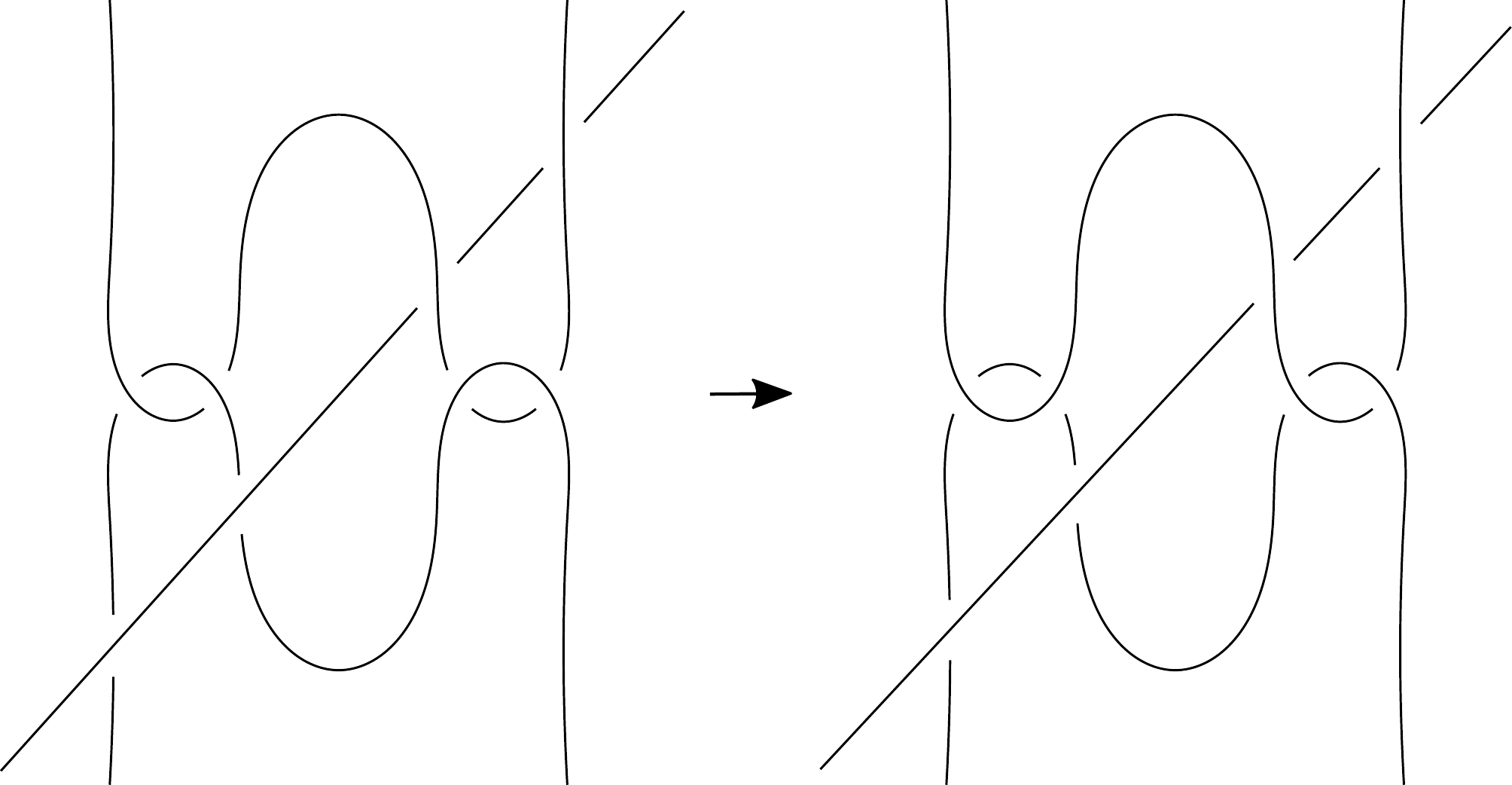}
\caption{$C_2$-move realized by two simultaneous $C_1$-moves}
\label{simdelta}
\end{figure}

Conversely, let $h_t\:N\to M$ be a $(-\frac12)$-quasi-isotopy and $h_s$ be its singular instant.
Thus $h_s$ is a $(-\frac12)$-quasi-tangle whose two double points $p$, $q$ have opposite signs (as normal double points 
of $h_t$) and are paired up by an immersed Whitney disk $W\:D\to M$, transverse to $h_s(N)$ except at $W(\partial D)$.
Thus the immersed Whitney disk meets the image of $N$ transversally in a finite number $n$ of interior points.
We want to replace $W$ by $n$ embedded Whitney disks, each meeting the image of $N$ transversally in just one 
interior point --- just like the Whitney disk arising as above from a $C_2$-move.

To do so, let us represent $D$ as the image of $I\x I$ under a quotient map $Q$ whose only non-singleton point-inverses
are $\{0\}\x I$ and $\{1\}\x I$, sent by $Q$ to the corners of $D$.
We may assume by general position that for $t\ne 0,1$ each segment $\{t\}\x I$ is embedded by $WQ$ and its image meets 
$h_s(N)$ in at most one interior point.
Pick a sequence of points $0=t_0<\dots<t_n=1$ such that each strip $[t_i,t_{i+1}]\x I$ is embedded by $WQ$ and its image
meets $h_s(N)$ in a single interior point.
It can be arranged that for a sufficiently small $\eps>0$ the interval $[s-\eps,s+\eps]$ of the homotopy $h_t$ 
has support in a neighborhood $U$ of $h_s^{-1}(\{p,q\})$ whose image is disjoint from $WQ([t_1,t_{n-1}]\x I))$.

\begin{figure}[h]
\includegraphics{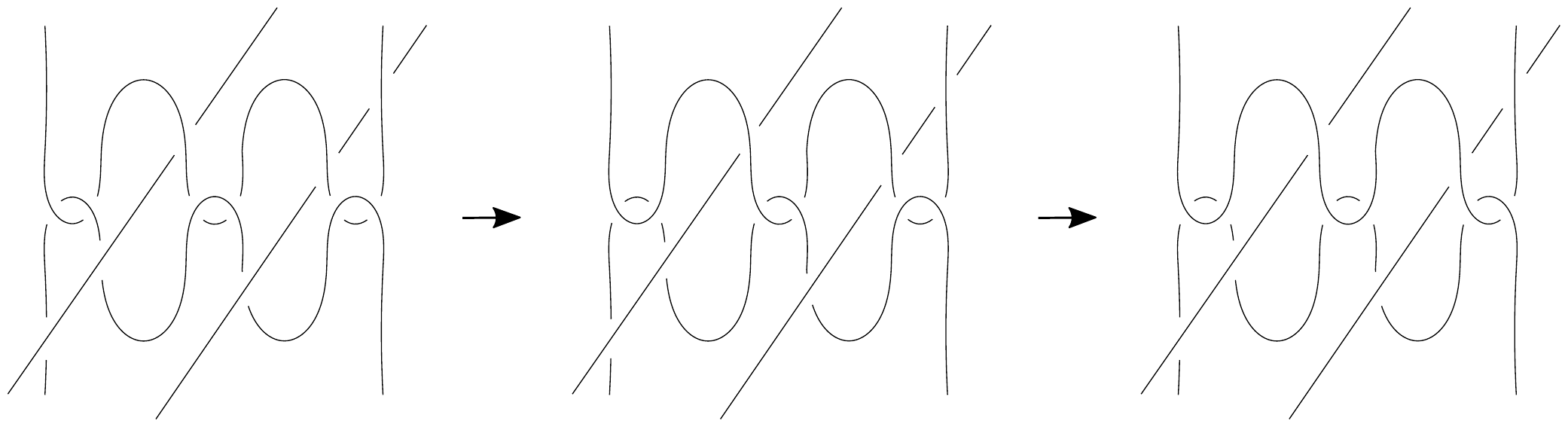}
\caption{$(-\frac12)$-quasi-tangle replaced by $n=2$ pairs of simultaneous $C_1$-moves}
\label{simdelta2}
\end{figure}

We join the tangles $L_0:=h_{s-\eps}$ and $L_{2n}:=h_{s+\eps}$ by the following sequence of $2n$ normal crossings 
(i.e., a normal homotopy).
See Figure \ref{simdelta2} for the case $n=2$.
Start by performing the original crossing of $h_t$ at $p$ (without making any crossing at $q$ yet); by symmetry 
we may assume that it is a positive crossing.
Let $L_1$ denote the resulting tangle.
Now cross the two strands $WQ([\frac{t_0+t_1}2,\frac{t_1+t_2}2]\x\partial I)$ by pushing each halfway to the other one 
along the segment $W(t_1\x I)$ and introducing a half-twist so as to get a negative crossing.
Let $L_2$ denote the resulting tangle.
Then undo the homotopy from $L_1$ to $L_2$ by repeating it with time reversed, so that the negative crossing just made
is cancelled by the positive crossing of the inverse homotopy.
The resulting tangle $L_3$ is same as $L_1$.
Repeat such a cancelling pair of homotopies along each of the segments $t_2\x I,\dots,t_{n-1}\x I$, thus obtaining
tangles $L_4,\dots,L_{2n-1}$ so that $L_5,L_7,\dots,L_{2n-1}$ are the same as $L_1$ and $L_3$.
Finally perform the original crossing of $h_t$ at $q$ to get the tangle $L_{2n}=h_{s+\eps}$.
It remains to observe that for each $i=0,\dots,n$, the combination of the positive crossing along $t_i\x I$ 
(from $L_{2i}$ to $L_{2i+1}$) and the negative crossing along $t_{i+1}\x I$ (from $L_{2i+1}$ to $L_{2i+2}$), 
if done simultaneously, are paired up by a Whitney disk meeting the image of $N$ transversally in one interior point.
This Whitney disk arises as above from a $C_2$-move between $L_{2i}$ and $L_{2i+2}$.
\end{proof}

\begin{corollary} \label{whitney1'}
Two tangles are $\frac12$-quasi-isotopic if and only if they are self $C_2$-equivalent.
\end{corollary}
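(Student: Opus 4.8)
The plan is to repeat the proof of Lemma~\ref{whitney1} while keeping track of the single component on which all the $C_2$-moves are performed; the entire content of the corollary is that this bookkeeping goes through. Throughout, I use the notation $N=K\sqcup L$ of the definition of a $\frac12$-quasi-tangle, so that $K$ is the connected component carrying the double points and $L$ is the (embedded) union of the remaining components.

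For the implication that self $C_2$-equivalence gives $\frac12$-quasi-isotopy, I would consider a single self $C_2$-move between the two given tangles. By definition its three strands $J_1,J_2,J_3$ all lie on one component $K$ of $N$; let $L$ denote the union of the others. As in the first paragraph of the proof of Lemma~\ref{whitney1}, this move is realized by a homotopy whose only singular instant has two normal double points of opposite signs, both \emph{self}-intersections of $K$ arising between $J_1$ and $J_2$, paired by a small embedded Whitney disk meeting $K$ transversally in a single interior point on $J_3$ (Figure~\ref{simdelta}). Since the move is supported in a small ball chosen disjoint from $f(L)$, this Whitney disk is disjoint from $f(L)$ and the homotopy keeps $K$ and $L$ disjoint; hence the loop bounding the disk is null-homotopic in $M\but f(L)$, and the singular instant is a $\frac12$-quasi-tangle with respect to $N=K\sqcup L$. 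Thus the realizing homotopy is a $\frac12$-quasi-isotopy, and concatenating over all the moves shows that the two tangles are $\frac12$-quasi-isotopic.

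For the converse, let $h_t\:N\to M$ be a $\frac12$-quasi-isotopy and $h_s$ one of its singular instants, which is then a $\frac12$-quasi-tangle with $N=K\sqcup L$: its two double points $p,q$ are self-intersections of $K$ of opposite signs, paired by an immersed Whitney disk $W\:D\to M\but f(L)$. I would apply the construction from the second half of the proof of Lemma~\ref{whitney1} verbatim, subdividing $W$ so as to replace it by $n$ embedded Whitney disks each meeting the image of $N$ in one interior point, and correspondingly realizing the passage across $h_s$ by $n$ successive $C_2$-moves (Figure~\ref{simdelta2}); the non-singular segments of $h_t$ are covered by ambient isotopies, exactly as before. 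The one new observation is that, since $W$ takes values in $M\but f(L)$, every interior intersection of $W$ with $N$ lies on $K$ rather than on $L$, so the three strands of each of the $n$ resulting $C_2$-moves all lie on $K$; that is, each is a self $C_2$-move.

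There is no substantial obstacle here beyond this point of care: the definition of a $\frac12$-quasi-tangle builds the $(-\frac12)$-quasi-tangle structure into the complement $M\but f(L)$ of the remaining components, and it is precisely this feature that forces the Whitney disk to avoid $f(L)$ and hence guarantees that every produced $C_2$-move involves strands of the single component $K$ only. Everything else is imported unchanged from Lemma~\ref{whitney1}.
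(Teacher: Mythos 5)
Your proposal is correct and matches the paper's (implicit) argument: the paper states this corollary without a separate proof precisely because it follows by running the proof of Lemma~\ref{whitney1} relative to the complement of the embedded components, which is exactly what you do. Your two key observations --- that a self $C_2$-move's small Whitney disk avoids $f(L)$, and that conversely the Whitney disk of a $\frac12$-quasi-tangle lives in $M\but f(L)$ so all interior intersections (hence all third strands of the resulting $C_2$-moves) lie on $K$ --- are the exact bookkeeping the paper leaves to the reader.
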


It is known that two links are $0$-quasi-isotopic, i.e.\ link homotopic, if and only if
they are equivalent up to $C_2$-moves that involve strands from at most two components \cite{NS}.

\begin{remark} \label{unknotting}
Since there are no nontrivial type $1$ invariant of knots, every knot $S^1\emb S^3$ is $C_2$-equivalent to the unknot 
by the simplest case of the Goussarov--Habiro theorem \cite{Gu}, \cite{Ha}.
It also follows from this that every string knot $I\emb I\x\R^2$ is $C_2$-equivalent to the trivial string 
knot $\Xi$.
Another proof of these facts can be obtained from Lemma \ref{whitney1}.
Indeed, every generic null-homotopy of a knot can be made into a $(-\frac12)$-quasi-isotopy by introducing 
simultaneously with every crossing change a first Reidemeister move of the opposite sign.
\end{remark}

\subsection{Classification: string links}

We will use the notation $\eps_z$, $l_z$, $\Delta(h_+)$ from the definition of 
$\Sigma\:LH_{\Lambda,\Lambda'}\to\Z[t^{\pm1}]\oplus\Z[t^{\pm1}]$.

\begin{lemma} \label{delayed crossing}
Let $h$ be a generic link homotopy between two-component string links $\Lambda$ and $\Lambda'$.
Suppose that $h$ has precisely two double points $z_1$, $z_2$, where $z_1$ occurs before $z_2$.
Then $\Lambda$ and $\Lambda'$ are joined by a generic link homotopy $h'$ with precisely two double points
$w_1$, $w_2$ such that $w_i$ occurs on the same component as $z_i$, $\eps_{w_i}=\eps_{z_i}$ and
$l_{w_i}=l_{z_i}$ for each $i=1,2$, but $w_1$ occurs after $w_2$.
\end{lemma}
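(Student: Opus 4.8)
The plan is to split $h$ into isotopies separated by the two crossing changes, slide the crossings adjacent in time by means of the isotopy extension theorem, and then commute them. Write $\Lambda_1^-,\Lambda_1^+$ for the tangles just before and after the crossing $X_1$ at $z_1$, and $\Lambda_2^-,\Lambda_2^+$ likewise for the crossing $X_2$ at $z_2$; thus $h=A_0*X_1*A_1*X_2*A_2$, where $A_0,A_1,A_2$ are isotopies (embeddings at every instant), $A_1$ runs from $\Lambda_1^+$ to $\Lambda_2^-$, and each $X_i$ is supported in a small ball $B_i\subset I\x\R^2$ around $z_i$. Since $z_i$ is a self-intersection of a single component, while distinct components of a link map stay disjoint throughout the homotopy, $z_i$ avoids the other (compact) component, so $B_i$ may be chosen disjoint from it.

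First I would delay $X_1$ past the isotopy $A_1$. By the isotopy extension theorem, $A_1$ is covered by an ambient isotopy $\Phi_t$ of $I\x\R^2$ fixing the boundary, with $\Phi_0=\id$ and $\Phi_1(\Lambda_1^+)=\Lambda_2^-$. Running $\Phi_t$ on $\Lambda_1^-$ and then performing the transported crossing change $X_1':=\Phi_1 X_1\Phi_1^{-1}$, supported in $\Phi_1(B_1)$, gives a homotopy from $\Lambda_1^-$ to $\Lambda_2^-$ in which the crossing now occurs \emph{after} the isotopy; this replaces $X_1*A_1$ and leaves $X_1'$ adjacent to $X_2$. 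The points $\Phi_1(z_1)$ and $z_2$ are distinct (on the same component they are distinct crossing sites, on different components trivially so), so the supports $\Phi_1(B_1)$ and $B_2$ are disjoint for small balls, and the two crossing changes commute: doing $X_2$ first and $X_1'$ second reaches the same tangle $\Lambda_2^+$. Setting $w_2:=X_2$ and $w_1:=X_1'$, and reassembling with $A_0$ and $A_2$, this produces the required link homotopy $h'$ with $w_1$ occurring after $w_2$ (perturbing slightly to keep it generic).

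It remains to verify the three invariants. Since $\Phi_1$ preserves components, $w_1$ occurs on the same component as $z_1$, while $w_2=X_2$ is literally the same move as $z_2$. The signs agree: conjugation by the orientation-preserving diffeomorphism $\Phi_1$ carries the defining frame of $X_1$ at $z_1$ to that of $X_1'$ at $\Phi_1(z_1)$ by $d\Phi_1$, preserving its sign, so $\eps_{w_1}=\eps_{z_1}$, and $\eps_{w_2}=\eps_{z_2}$ is immediate. The main obstacle is the linking numbers, since in $h'$ each lobe is measured at a time when the \emph{other} crossing has already been (or not yet been) performed. The resolution is the observation that every crossing here is a self-intersection localized in a ball meeting only its own component, hence a homotopy of that one component within the complement of the other, which leaves fixed the linking numbers of disjoint oriented curves. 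Concretely: $\Phi_1$ preserves linking numbers, so before $X_2$ is inserted the lobe of $X_1'$ links the other component exactly $l_{z_1}$ times; reordering past $X_2$ does not disturb this, because $X_2$ is supported in a ball disjoint from the curve whose linking with the other component is being measured; and the same remark shows that performing $X_2$ on $\Phi_1(\Lambda_1^-)$ rather than on $\Lambda_2^-$ (these differing only by the self-crossing $X_1$ inside $\Phi_1(B_1)$) leaves $l_{w_2}=l_{z_2}$. Hence $l_{w_i}=l_{z_i}$, completing the verification.
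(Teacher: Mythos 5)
Your proposal is correct and takes essentially the same approach as the paper's proof: the paper likewise localizes each crossing change in a small neighborhood (of a short arc joining the two branches near the double point), covers the intermediate isotopy by an ambient isotopy via isotopy extension, transports the first crossing's support forward in time so that the two supports become disjoint, and then performs the two localized crossings in the opposite order. Your verification that the signs and linking numbers survive the transport and the reordering (via orientation-preservation of the ambient diffeomorphism and the fact that a self-crossing is a homotopy of one component in the complement of the other) is, if anything, spelled out in more detail than in the paper, which asserts these properties of the localized crossings without elaboration.
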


The idea of proof is to ``delay'' one intersection until another one occurs. 
To this end, we grow a ``tendril'', which is eliminated after the delayed intersection is finally performed.
The author learned this technique from P. Akhmetiev; the word ``tendril'' is also his (see \cite{AR}).

\begin{proof} Let us write $h$ as $h_t\:I\sqcup I\to I\x\R^2$.
We have $z_1=h_{s_1}(x_1)=h_{s_1}(y_1)$ and $z_2=h_{s_2}(x_2)=h_{s_2}(y_2)$, where $s_1<s_2$.
Then $h_t$ for $t\in [s_1+\eps,\,s_2-\eps]$ is a smooth isotopy, so it is covered by an ambient isotopy
$H_t\:I\x\R^2\to I\x\R^2$, so that, in particular, $H_0=\id$ and $H_1 h_{s_1+\eps}=h_{s_2-\eps}$.
Let $J_1$ be a short arc in $I\x\R^2$ connecting $h_{s_1+\eps}(x_1)$ and $h_{s_1+\eps}(y_1)$ and otherwise
disjoint from $h_{s_1+\eps}(I\sqcup I)$.
Then $H_1(J_1)$ is an arc meeting $h_{s_2-\eps}$ only in its endpoints.
Let $J_2$ be a short arc in $I\x\R^2$ connecting $h_{s_2-\eps}(x_2)$ and $h_{s_2-\eps}(y_2)$ and otherwise
disjoint from $h_{s_2-\eps}(I\sqcup I)$ and from $H_1(J_1)$.
Since $H_1$ is uniformly continuous, some neighborhood $N_2$ of $J_2$ is disjoint from $H_1(N_1)$ for some 
neighborhood $N_1$ of $J_1$.

Then $h_{s_1-\eps}$ is isotopic by a smooth isotopy $g^1_t$ to a string link $\Lambda_1$ that is link homotopic 
to $h_{s_1+\eps}$ by a generic homotopy $f^1_t$ with support in $N_1$.
Moreover, we may assume that $f^1_t$ has precisely one double point $v_1$ that occurs in the same component
as $z_1$ and satisfies $\eps_{v_1}=\eps_{z_1}$ and $l_{v_1}=l_{z_1}$.

Similarly, $h_{s_2-\eps}$ is link homotopic by a generic homotopy $f^2_t$ with support in $N_2$ to a string link 
$\Lambda_2$ that is isotopic to $h_{s_2+\eps}$ by a smooth isotopy $g^2_t$.
Moreover, we may assume that $f^2_t$ has precisely one double point $v_2$ that occurs in the same component
as $z_2$ and satisfies $\eps_{v_2}=\eps_{z_2}$ and $l_{v_2}=l_{z_2}$.

It remains to replace the time interval $[s_1-\eps,\,s_2+\eps]$ of the link homotopy $h_t$ by the following
sequence of link homotopies: (i) $g^1_t$; (ii) $H_t\Lambda_1$; (iii) $f^2_t$ in $N_2$ and identity elsewhere; 
(iv) $H_1f^1_t$ in $H_1(N_1)$ and identity elsewhere; (v) $g^2_t$.
\end{proof}

\begin{lemma} \label{whitney permutation}
Let $h=h_+\sqcup h_-$ be a generic link homotopy between two-component string links $\Lambda$ and $\Lambda'$.
Suppose that $h$ has precisely two double points $z$ and $w$, both in $\Delta(h_+)$, such that $l_z=l_w$
and $\eps_z=-\eps_w$.
Then $\Lambda$ and $\Lambda'$ are $\frac12$-quasi-isotopic.
\end{lemma}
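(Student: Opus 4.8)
The plan is to upgrade the given generic link homotopy $h$ into a genuine $\frac12$-quasi-isotopy between $\Lambda$ and $\Lambda'$; the heart of the matter is to realize a single instant carrying both of the double points $z$, $w$ as a $\frac12$-quasi-tangle. Since a concatenation of $\frac12$-quasi-isotopies is again a $\frac12$-quasi-isotopy, it suffices to exhibit $h$, after convenient modifications, as such a homotopy.

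First I would put $h$ into a convenient form. Since $\Delta(h_-)=\emptyset$, the component $h_-$ is an isotopy and carries a single string knot type $K_-$ throughout. By the delaying technique (the construction in the proof of Lemma \ref{delayed crossing}), and because there are only two double points in total, I may bring $z$ and $w$ to consecutive times and then, as two crossing changes that are close in time and supported in disjoint balls commute, arrange that they occur at one and the same instant $s$. This produces a normal homotopy with a single singular level $h_s$, preceded and followed by isotopies (hence trivially $\frac12$-quasi-isotopies) from $\Lambda$ and to $\Lambda'$. By sliding the crossing points along the first strand before the event — an isotopy, which leaves $\eps_z$, $\eps_w$ unchanged and keeps $l_z=l_w$ — I may also assume that the arcs $J_z,J_w\subset I$ are nested, say $J_z\subset J_w$.

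The key geometric claim is that $h_s$ is a $\frac12$-quasi-tangle whose two double points have opposite signs. The sign condition is immediate from the hypothesis $\eps_z=-\eps_w$, which all of the above moves preserve. For the $(-\frac12)$-quasi-tangle condition on $h_+\colon I\to I\x\R^2\but h_-(I)$, I take $J_+$ and $J_-$ to be the closures of the two components of $J_w\but J_z$; then $h_s(\partial J_+)=h_s(\partial J_-)=\{z,w\}$, and the concatenation $\gamma$ of $h_s|_{J_+}$ and $h_s|_{J_-}$ is a loop. Writing $h_s(J_w)$ as the product of $h_s|_{J_-}$, the loop $h_s(J_z)$, and $h_s|_{J_+}$, I obtain $[\gamma]=l_w-l_z=0$ in $H_1\big(I\x\R^2\but h_-(I)\big)\cong\Z$.

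Here is the main obstacle, and the reason the previous paragraph is not yet enough: vanishing of $[\gamma]$ in $H_1$ yields a null-homotopy of $\gamma$ only when $\pi_1\big(I\x\R^2\but h_-(I)\big)$ is abelian, that is, when $K_-$ is unknotted. To secure this I would, just before the event, insert a self $C_2$-equivalence of the second component that unknots $h_-$ (possible by Remark \ref{unknotting}) and, just after the event, insert its reverse to restore $K_-$ and reach $\Lambda'$. By Corollary \ref{whitney1'} each such insertion is a $\frac12$-quasi-isotopy supported on the second component, and since $C_2$-moves on $h_-$ preserve all linking numbers with $h_+$, the equality $l_z=l_w$ — and hence $[\gamma]=0$ — survives with $h_-$ now unknotted. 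Then $\pi_1\big(I\x\R^2\but h_-(I)\big)\cong\Z$, so $\gamma$ is null-homotopic, $h_s$ is the required $\frac12$-quasi-tangle, and the whole normal homotopy (unknot $h_-$, perform the simultaneous event at $s$, reknot $h_-$, interspersed with isotopies) is a $\frac12$-quasi-isotopy from $\Lambda$ to $\Lambda'$.
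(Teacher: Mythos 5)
Your overall strategy is the same as the paper's: delay the two crossings to a single instant, reduce to the case where the second component is unknotted via Remark \ref{unknotting} and Corollary \ref{whitney1'}, and then convert $l_z=l_w$ into a null-homotopy of the Whitney loop using $\pi_1\big(I\x\R^2\setminus h_-(I)\big)\cong\Z$. Most of your steps (simultaneity of the crossings, the computation $[\gamma]=l_w-l_z$, the insertion of unknotting self $C_2$-moves and their reverse) are sound. But there is a genuine gap at the step where you claim that ``by sliding the crossing points along the first strand before the event --- an isotopy --- I may also assume that the arcs $J_z,J_w\subset I$ are nested.'' This cannot be achieved by an isotopy: conjugating the homotopy by an ambient isotopy, or inserting an isotopy before the singular instant, does not change which pairs of parameter values in $I$ get identified at the double points, so the combinatorial configuration of the four preimage points --- nested, linked, or disjoint --- is unchanged by any such move. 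The disjoint configuration (preimages of $z$ occurring at $a_1<a_2$ and preimages of $w$ at $b_1<b_2$ with $a_2<b_1$) is perfectly possible under the hypotheses $l_z=l_w$, $\eps_z=-\eps_w$, and in that configuration there is \emph{no} pair of disjoint arcs $J_+,J_-$ each joining a preimage of $z$ to a preimage of $w$, so the $(-\frac12)$-quasi-tangle condition cannot even be formulated for $h_s$; your argument breaks down exactly there and no isotopy repairs it. (In the linked configuration $a_1<b_1<a_2<b_2$ the arcs $[a_1,b_1]$ and $[a_2,b_2]$ are disjoint and the loop class is again $l_z-l_w=0$, so that case would in fact go through directly; the disjoint case is the real obstruction.)

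What is actually needed --- and what the paper does --- is to go back in time and redefine the crossing changes themselves, not merely isotope them: one precedes the two (delayed, simultaneous) crossings by a finger move pulling a neighborhood of one endpoint of $J_z$ past a neighborhood of an endpoint of $J_w$ along the strand, and then performs two simultaneous crossings between \emph{different pairs of points} than before. The nontrivial point, which the paper verifies pictorially (Figure \ref{reorder}, ``Luckily, it is easy to see that the resulting link is isotopic to $h_{s+\eps}$''), is that this restructured pair of crossings still terminates at the same string link, while the new double points have the same signs, the same linking numbers, and a permuted endpoint configuration; iterating such permutations achieves nesting. Without this rearrangement step, or a substitute for it, your proof covers only the configurations that happen to be nested or linked from the start. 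A secondary, smaller point: when you insert the unknotting self $C_2$-equivalence of $h_-$ ``just before the event'' and its reverse ``just after,'' you need the supports of those moves to be disjoint from the balls where the event's crossings take place in order for the composite homotopy to still end at $\Lambda'$; the paper sidesteps this by instead appending the unknotting homotopy at the end and then delaying the two crossings past it, using the construction of Lemma \ref{delayed crossing}.
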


\begin{proof} Let us first consider the case where some (or equivalently each) time instant of $h_-$ is
unknotted.
We may assume by symmetry that $z$ occurs before $w$.
Arguing as in Lemma \ref{delayed crossing}, we may delay $z$ until $w$ so as to make the two crossings
simultaneously.
Let $f_t=f_t^+\sqcup f_t^-$ be the resulting link homotopy and let $f_s\:I\sqcup I\to I\x\R^2$ be its singular instant; 
its two double points, still denoted $z$ and $w$, satisfy $l_z=l_w$ and $\eps_z=-\eps_w$.
Here $l_z$ and $l_w$ are defined using the arcs $J_z,J_w\subset I$ such that $f_s^+(\partial J_z)=z$ and 
$f_s^+(\partial J_w)=w$.
Namely, $l_z$ and $l_w$ are the classes of $h_s^+(J_z)$ and $h_s^+(J_w)$ in $H_1\big(I\x\R^2\but h_s^-(I)\big)$.

If $J_w\subset J_z$, then $[h_s^+(\Cl{J_z\but J_w})]=0\in H_1\big(I\x\R^2\but h_s^-(I)\big)$.
Since $h_s^-$ is unknotted by our assumption, $\pi_1\big(I\x\R^2\but h_s^-(I)\big)\simeq\Z$, and so 
$h_s^+(\Cl{J_z\but J_w})$ is null-homotopic in $I\x\R^2\but h_s^-(I)$.
Hence $h_s^+$ is a $\frac12$-quasi-tangle.

The case $J_z\subset J_w$ is similar, but if neither of the two inclusions hold, we need to go back in time 
and redefine $f_t$ so as to achieve one of these inclusions.
It suffices to be able to permute a specified endpoint $x$ of $J_z$ with a specified endpoint $y$ of $J_w$,
provided that the other endpoint $x'$ of $J_z$ and the other endpoint $y'$ of $J_w$ do not belong to 
the arc $[x,y]\subset I$.
To this end let us return to the instant $h_{s-\eps}$ for a sufficiently small $\eps>0$, and precede the two
crossings by pulling $h_{s-\eps}^+(N_{x'})$ past $h_{s-\eps}^+(N_{y'})$ along $h^+_{s-\eps}([x,y])$, where 
$N_{x'}$ and $N_{y'}$ are sufficiently small neighborhods of $x'$ any $y'$ in $I$.
Now perform the delayed crossings, simultaneously, one between neighborhoods of $y$ and $y'$ and another 
between neighborhoods of $x'$ and of a point $x''$ that is near $y$ but is not in $[x,y]$.
Luckily, it is easy to see that the resulting link is isotopic to $h_{s+\eps}$.
(See Figure \ref{reorder}.)

\begin{figure}[h]
\includegraphics[width=8cm]{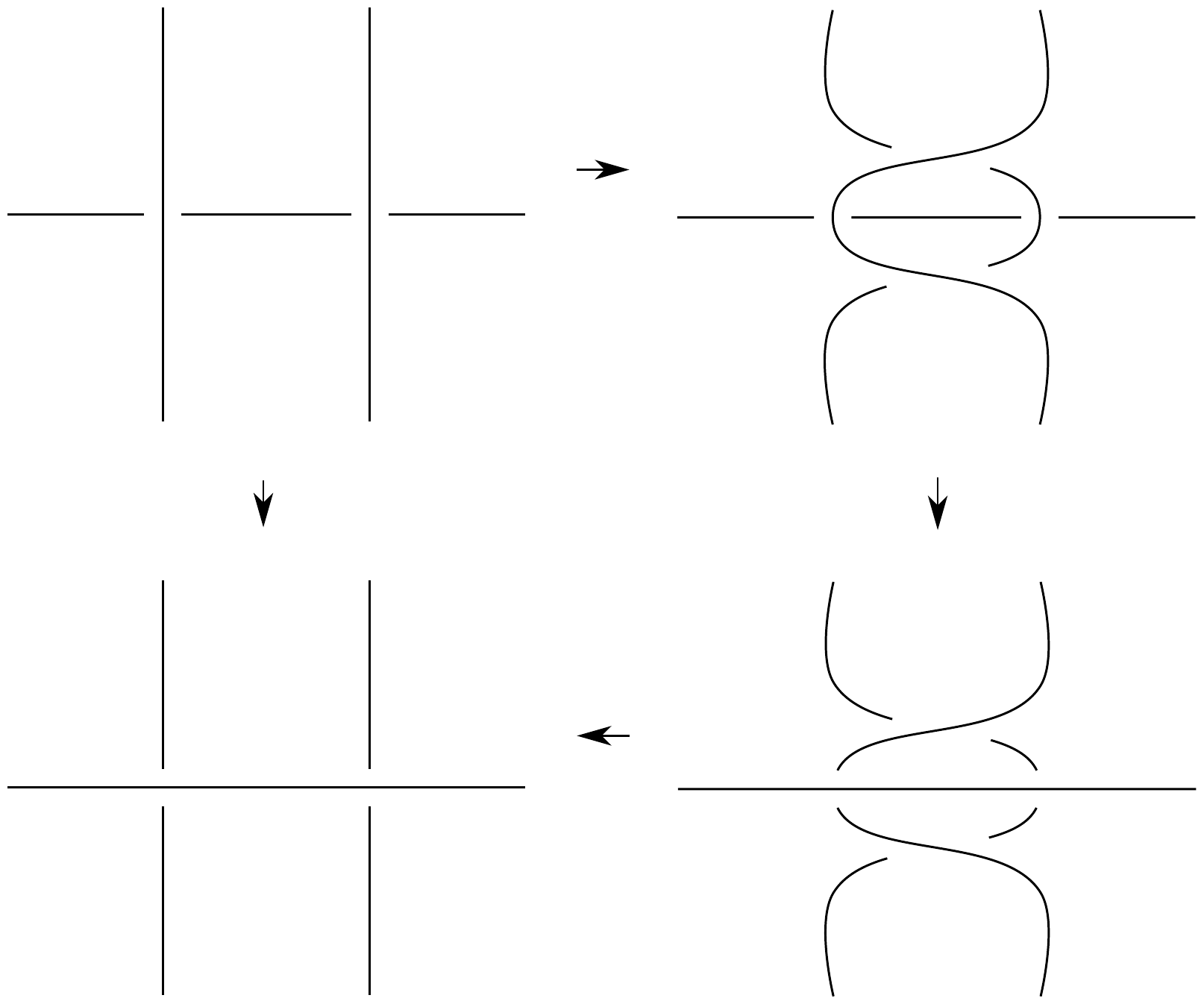}
\caption{Rearranging a pair of simultaneous $C_1$-moves}
\label{reorder}
\end{figure}

Now let us prove the lemma in the general case.
Every knot is $C_2$-equivalent to the unknot (see Remark \ref{unknotting}).
Hence $\Lambda'$ is self $C_2$-equivalent to a string link $\tilde\Lambda'=\tilde\Lambda'_+\sqcup\tilde\Lambda'_-$ 
such that $\tilde\Lambda'_-$ is unknotted. 
Let $g$ be a $\frac12$-quasi-isotopy between $\Lambda'$ and $\tilde\Lambda'$, and let $1_{\tilde\Lambda'}$ be 
the identical self-homotopy of $\tilde\Lambda'$.
Then $h':=h*g*1_{\tilde\Lambda'}$ is a link homotopy between $\Lambda$ and $\tilde\Lambda'$.

By Lemma \ref{delayed crossing} we may delay the two crossings of $h'$ until $1_{\tilde\Lambda'}$; 
thus there exist a $\frac12$-quasi-isotopy $g'$ from $\Lambda$ to some string link $\tilde\Lambda$
and a generic link homotopy $f=f_+\sqcup f_-$ from $\tilde\Lambda$ to $\tilde\Lambda'$ with precisely 
two double points $z'$ and $w'$, both in $\Delta(f_+)$, such that $l_{z'}=l_{w'}$ and $\eps_{z'}=-\eps_{w'}$.
By the above, $\tilde\Lambda$ and $\tilde\Lambda'$ are joined by a $\frac12$-quasi-isotopy $f'$.
Then $g'*f'*\bar g$ is $\frac12$-quasi-isotopy from $\Lambda$ to $\Lambda'$.
\end{proof}

\begin{theorem} \label{NOstring}
Two-component string links $\Lambda$ and $\Lambda'$ are self $C_2$-equivalent if and only if 
$\lk(\Lambda)=\lk(\Lambda')$ and $\beta_i(\Lambda)=\beta_i(\Lambda')$ for $i=0,1$.
\end{theorem}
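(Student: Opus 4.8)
The plan is to prove both directions, with the forward direction being essentially immediate from work done earlier and the reverse direction being the substantive content. For the forward implication, suppose $\Lambda$ and $\Lambda'$ are self $C_2$-equivalent. By Corollary \ref{whitney1'} they are $\frac12$-quasi-isotopic. Since $\lk$ and the invariants $\beta_i$ are invariants of self $C_2$-equivalence (for $\lk$ this is immediate since $C_2$-moves preserve linking numbers; for $\beta$ this was observed in \S\ref{genSL} using the crossing change formula and Figure \ref{simdelta}), we obtain $\lk(\Lambda)=\lk(\Lambda')$ and $\beta_i(\Lambda)=\beta_i(\Lambda')$ for $i=0,1$ at once.

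For the reverse implication, assume $\lk(\Lambda)=\lk(\Lambda')$ and $\beta_i(\Lambda)=\beta_i(\Lambda')$ for $i=0,1$. First I would invoke Corollary \ref{realization1}: since $\Lambda$ and $\Lambda'$ are link homotopic (having equal linking numbers) and $\beta_i(\Lambda)=\beta_i(\Lambda')$, there is a link homotopy $h$ between them with $\Sigma(h)=0$. The vanishing of $\Sigma(h)$ is the key algebraic input: it says that after suitable amendment the double points of $h$ can be organized into \emph{algebraically canceling pairs}. Concretely, $\Sigma_+(h)=0$ means that for each nonzero value $g$ the signed count of double points $z\in\Delta(h_+)$ with $l_z=g$ is zero, and similarly for $h_-$; double points with $l_z=0$ are irrelevant since they can be removed (each such crossing change can be realized so as to preserve the string link type up to self $C_2$-equivalence, as they are homotopically trivial in the complement). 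Thus I can pair up the remaining double points on each component into pairs $(z,w)$ with $\eps_z=-\eps_w$ and $l_z=l_w$.

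Having secured this pairing, the next step is purely geometric and reduces to the already-proved lemmas. Using Lemma \ref{delayed crossing}, I would delay crossings so as to bring each canceling pair $(z,w)$ into adjacency in time, isolating a sub-homotopy with precisely the two double points $z,w$ satisfying $l_z=l_w$ and $\eps_z=-\eps_w$, while all other double points are pushed away. To each such isolated pair I then apply Lemma \ref{whitney permutation}, which yields exactly that the string links before and after this sub-homotopy are $\frac12$-quasi-isotopic. Concatenating these $\frac12$-quasi-isotopies over all the canceling pairs produces a single $\frac12$-quasi-isotopy from $\Lambda$ to $\Lambda'$, and by Corollary \ref{whitney1'} this is the same as self $C_2$-equivalence.

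The main obstacle I anticipate is bookkeeping the passage from ``$\Sigma(h)=0$'' to a clean pairing of double points that feeds Lemma \ref{whitney permutation} one pair at a time. The issue is that $\Sigma$ only records the \emph{signed} count in each value class $l_z=g$, whereas the lemma wants an \emph{honest} two-double-point sub-homotopy in which the two crossings occur consecutively and no other crossings interfere. Arranging this requires repeatedly invoking the delaying technique of Lemma \ref{delayed crossing} to shuffle crossings past one another without changing their data $(\sigma_z,\eps_z,l_z)$, and then handling the $l_z=0$ double points separately (they contribute nothing to $\Sigma$ but must still be eliminated via self $C_2$-equivalence). Once this reduction is carried out carefully, each remaining step is a direct citation of the geometric lemmas, so the real work is confined to this organizational argument, which the author has in fact foreshadowed in the ``Sketch of the proof'' in the introduction.
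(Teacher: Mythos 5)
Your proposal follows the paper's proof of Theorem \ref{NOstring} step for step: Corollary \ref{realization1} produces a link homotopy $h$ with $\Sigma(h)=0$, the double points are grouped into algebraically canceling pairs, Lemma \ref{delayed crossing} makes each pair consecutive in time, Lemma \ref{whitney permutation} handles each isolated pair, and Corollary \ref{whitney1'} converts $\frac12$-quasi-isotopy into self $C_2$-equivalence. So the approach is the same and the overall outline is correct.

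The one place where you go beyond the paper is your treatment of the double points with $l_z=0$, and there your justification is not yet a proof. You are right that $\Sigma(h)=0$ controls only the classes $l_z=g$ with $g\ne 0$: the summand $t^{l_z}-1$ vanishes when $l_z=0$, and the signed count of such double points is not even an invariant, since a kink (a crossing change at a small loop) contributes a single double point of either sign with $l_z=0$. But your parenthetical reason for discarding them --- that each such crossing change ``preserves the string link type up to self $C_2$-equivalence, as they are homotopically trivial in the complement'' --- is both imprecise and circular as stated. First, $l_z=0$ says only that the lobe is null-homologous in the complement of the other component; null-homotopy follows only after that component has been unknotted, which is exactly the reduction performed inside the proof of Lemma \ref{whitney permutation}. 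Second, self-$C_2$-invariance of a single crossing change with $l_z=0$ is essentially an instance of the theorem being proved, so it cannot simply be asserted. The clean repair uses only tools already at hand: for each unpaired double point with $l_z=0$, insert into $h$ a kink of the opposite sign (a self-link-homotopy returning to the same string link, with one double point of linking number $0$); after this amendment all double points, including those with $l_z=0$, fall into pairs $(z,w)$ with $l_z=l_w$ and $\eps_z=-\eps_w$, and Lemmas \ref{delayed crossing} and \ref{whitney permutation} then apply to every pair uniformly. It is worth noting that the paper itself is silent on this point --- its proof asserts outright that $\Sigma(h)=0$ partitions $\Delta(h_\pm)$ into canceling pairs, which is literally false for the $l_z=0$ class without such an amendment --- so you correctly identified a spot needing an extra sentence, but that sentence has to be the kink insertion, not an appeal to homotopic triviality.
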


A slightly different classification of two-component string links up to self $C_2$-equivalence was obtained by 
Fleming and Yasuhara \cite{FY07}.

\begin{proof} The ``only if'' assertion is clear.
Conversely, since $\lk(\Lambda)=\lk(\Lambda')$, $\Lambda$ and $\Lambda'$ are link homotopic.
Then by Corollary \ref{realization1}, they are joined by a link homotopy $h=h_+\sqcup h_-$ such that $\Sigma(h)=0$.
Thus $\Delta(h_+)$ is partitioned into pairs $(z,z')$ such that $l_z=l_{z'}$ and $\eps_z=-\eps_{z'}$; and similarly
for $\Delta(h_-)$. 
By Lemma \ref{delayed crossing} we may assume that for every such pair $(z,z')$, $z'$ occurs immediately after $z$
in the homotopy, with no other double points in between.
Then by Lemma \ref{whitney permutation} $\Lambda$ is $\frac12$-quasi-isotopic to $\Lambda'$.
Hence by Corollary \ref{whitney1'} $\Lambda$ is self $C_2$-equivalent to $\Lambda'$.
\end{proof}

\subsection{Classification: links}

The following lemma follows by the proof of Lemma \ref{delayed crossing}.

\begin{lemma} \label{delayed crossing2}
Let $H$ be a generic link homotopy between two-component links $L$ and $L'$.
Suppose that $H$ has precisely two double points $z_1$, $z_2$, where $z_1$ occurs before $z_2$.
Then $L$ and $L'$ are joined by a generic link homotopy $H'$ with precisely two double points
$w_1$, $w_2$ such that $w_i$ occurs on the same component as $z_i$, $\eps_{w_i}=\eps_{z_i}$ and
$|l_{z_i}-\frac\lambda2|=|l_{w_i}-\frac\lambda2|$ for each $i=1,2$, but $w_1$ occurs after $w_2$.
\end{lemma}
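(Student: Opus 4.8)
The plan is to run the proof of Lemma~\ref{delayed crossing} essentially verbatim, with $S^1\sqcup S^1\emb S^3$ in place of $I\sqcup I\emb I\x\R^2$, and to record the linking data in the form that is actually well defined for links. Writing $H$ as $H_t\:S^1\sqcup S^1\to S^3$ with $z_1=H_{s_1}(x_1)=H_{s_1}(y_1)$, $z_2=H_{s_2}(x_2)=H_{s_2}(y_2)$ and $s_1<s_2$, I would observe that $H_t$ restricted to $[s_1+\eps,\,s_2-\eps]$ is a smooth isotopy, hence covered by an ambient isotopy $A_t\:S^3\to S^3$ with $A_0=\id$ and $A_1H_{s_1+\eps}=H_{s_2-\eps}$. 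I then grow a short tendril $J_1$ joining $H_{s_1+\eps}(x_1)$ to $H_{s_1+\eps}(y_1)$ and otherwise disjoint from the link, a short tendril $J_2$ joining $H_{s_2-\eps}(x_2)$ to $H_{s_2-\eps}(y_2)$ and disjoint from $A_1(J_1)$, together with disjoint neighborhoods $N_1,N_2$ satisfying $A_1(N_1)\cap N_2=\varnothing$, exactly as in the cited proof. Reassembling the interval $[s_1-\eps,\,s_2+\eps]$ by the same five-step concatenation then produces a generic link homotopy $H'$ whose two double points $w_1,w_2$ lie on the components of $z_1,z_2$ respectively, satisfy $\eps_{w_i}=\eps_{z_i}$, and occur with $w_2$ before $w_1$. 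Nothing in this reordering uses that the ambient manifold is $I\x\R^2$, and the signs are preserved because each local homotopy is a small, sign-preserving copy of the original crossing, so this part carries over to $S^3$ unchanged.

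The one point that genuinely differs from the string-link case, and the step I would treat with care, is the bookkeeping of linking numbers. For a string link the arc in $I$ between the two preimages of a double point is unique, so the construction preserves the single integer $l_z$. For a link there are two homotopy classes of connecting arc in $S^1$, giving two linking numbers $l_z,l_z'$ with $l_z+l_z'=\lambda$, and only the unordered pair $\{l_z,l_z'\}$ is intrinsic; this is precisely why the quantity recorded in the statement is $|l_z-\tfrac\lambda2|$, which depends only on that unordered pair. I would argue that the tendril construction, being supported in a small neighborhood of $J_i$ carried rigidly along by the orientation-preserving homeomorphism $A_t$, recreates the same linking configuration as $z_i$: the loop computing $l_{w_i}$ is isotopic, in the complement of the other component, to the loop computing the corresponding $l_{z_i}$, so that $\{l_{w_i},l_{w_i}'\}=\{l_{z_i},l_{z_i}'\}$. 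Since $l+l'=\lambda$ forces $|l-\tfrac\lambda2|=|l'-\tfrac\lambda2|$, this yields $|l_{w_i}-\tfrac\lambda2|=|l_{z_i}-\tfrac\lambda2|$, as required.

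Thus the proof splits into the reordering, which is formally identical to Lemma~\ref{delayed crossing}, and this single verification. Accordingly I expect the only real obstacle to be making precise that the tendril reproduces the unordered pair $\{l_{z_i},l_{z_i}'\}$ rather than some other pair with the same sum $\lambda$ but a different value of $|l-\tfrac\lambda2|$. To control this I would choose each tendril $J_i$ to run parallel to an arc of the singular component realizing one of the two linking numbers of $z_i$, so that the recreated crossing manifestly carries the matching pair; everything else is routine and identical to the string-link argument.
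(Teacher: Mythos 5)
Your proposal is correct and takes essentially the same route as the paper, whose entire proof of this lemma is the remark that it ``follows by the proof of Lemma \ref{delayed crossing}'': you rerun that tendril/delaying argument in $S^3$ and then check the linking-number bookkeeping. Your key observation --- that the recreated crossing is a localized copy of the original one, supported in a small ball carried by the ambient isotopy, so the unordered pair $\{l_{z_i},\lambda-l_{z_i}\}$, and hence $|l_{z_i}-\tfrac\lambda2|$, is preserved --- is exactly the point the paper leaves implicit.
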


\begin{lemma} \label{whitney permutation2}
Let $H=H_+\sqcup H_-$ be a generic link homotopy between two-component links $L$ and $L'$, $\lk(L)=\lambda$.
Suppose that $H$ has precisely two double points $z$ and $w$, both in $\Delta(H_+)$, such that 
$|l_z-\frac\lambda2|=|l_w-\frac\lambda2|$ and $\eps_z=-\eps_w$.
Then $L$ and $L'$ are $\frac12$-quasi-isotopic.
\end{lemma}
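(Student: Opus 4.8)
The plan is to mimic the proof of Lemma \ref{whitney permutation}, the one new feature being that on $S^1$, unlike on $I$, each double point determines two complementary arcs; the hypothesis $|l_z-\tfrac\lambda2|=|l_w-\tfrac\lambda2|$ records exactly that the unordered pairs $\{l_z,l_z'\}$ and $\{l_w,l_w'\}$ of their linking numbers with $H_-(S^1)$ agree. First I would reduce to the case that $H_-$ is unknotted, so that $\pi_1\big(S^3\but H_s^-(S^1)\big)\simeq\Z$ and a null-homologous loop there is automatically null-homotopic. This reduction is verbatim the one ending the proof of Lemma \ref{whitney permutation}: by Remark \ref{unknotting} and Corollary \ref{whitney1'}, $L'$ is $\tfrac12$-quasi-isotopic to some $\tilde L'$ whose second component is unknotted; appending this quasi-isotopy and the identical homotopy of $\tilde L'$ to $H$, and then delaying the two crossings into the identical part by Lemma \ref{delayed crossing2}, one is left with a generic homotopy carrying the same two double points between links whose second component stays unknotted throughout.

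In this unknotted situation I would delay $z$ until $w$ by Lemma \ref{delayed crossing2}, so that both crossings occur at a single instant $H_s$; write $x,y$ for the preimages of $z$ and $u,v$ for those of $w$ on $S^1_+$. The aim is to find two disjoint arcs on $S^1_+$, each joining a preimage of $z$ to one of $w$, whose concatenation is null-homotopic in $S^3\but H_s^-(S^1)$; this exhibits $H_s$ as a $\tfrac12$-quasi-tangle, and as the two double points have opposite signs, $H$ as a $\tfrac12$-quasi-isotopy. Since $\pi_1$ of the complement is $\Z$, it is enough to make the concatenated loop null-homologous. I would distinguish two cyclic configurations of the four points. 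If $u,v$ separate $x,y$ along $S^1_+$, then both pairings $[x,u]\sqcup[y,v]$ and $[x,v]\sqcup[y,u]$ are disjoint, and the linking numbers of their two loops are the differences of opposite inter-point arc-linkings; the agreement of the unordered pairs forces one of these differences to vanish. If $u,v$ do not separate $x,y$, the only disjoint pairing is the outer one, whose loop links $H_s^-$ by $l_z-l_w$ for the nested choice of arcs; the agreement of pairs makes this vanish in one subcase, while in the other I would first slide an endpoint to pass to the separated configuration.

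The step I expect to be the main obstacle is exactly this passage to the separated configuration. It is the $S^1$-analogue of the endpoint permutation of Figure \ref{reorder}, but now one must check that the rearrangement can be performed so as to preserve both the opposite signs $\eps_z=-\eps_w$ and the unordered pairs $\{l_z,l_z'\}=\{l_w,l_w'\}$, so that the hypothesis survives into the configuration where a null-homologous loop is available; the two-fold ambiguity of arcs on $S^1$ is what makes this bookkeeping more delicate than in the string-link case. With the unknotted case so established, the general case follows by the reduction of the first paragraph, just as at the end of the proof of Lemma \ref{whitney permutation}.
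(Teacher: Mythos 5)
Your proposal is correct and takes essentially the same route as the paper: reduce to an unknotted second component via Remark \ref{unknotting} and Corollary \ref{whitney1'}, make the two crossings simultaneous by Lemma \ref{delayed crossing2}, and resolve the two-arc ambiguity on $S^1$ by sliding endpoints of $J_z$ past endpoints of $J_w$ as in the proof of Lemma \ref{whitney permutation}, so that the hypothesis on unordered pairs becomes the equality $l_z-\frac\lambda2=l_w-\frac\lambda2$ needed for the nested Whitney-disk argument. The ``main obstacle'' you flag is exactly what the paper disposes of by citing the Figure \ref{reorder} construction (the rearranged pair of simultaneous crossings has the same signs and the same lobe linking classes, and yields the same resulting link), so your explicit case analysis of the two cyclic configurations --- no slide needed in the alternating one, a slide needed only in the nested subcase $l_z=l_w$ with $l_z+l_w\ne\lambda$ --- is a refinement of, not a departure from, the paper's one-line argument.
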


\begin{proof} This is similar to the proof of Lemma \ref{whitney permutation}, except that we now need to
improve the hypothesis $|l_z-\frac\lambda2|=|l_w-\frac\lambda2|$ into $l_z-\frac\lambda2=l_w-\frac\lambda2$.
This can be done as long as we are free to choose between $l_z$ and $l'_z$.
But such a choice is indeed possible, since we can freely slide endpoints of $J_z$ past endpoints of $J_w$,
by the proof of Lemma \ref{whitney permutation}.
\end{proof}

\begin{theorem} {\rm (Nakanishi--Ohyama \cite{NO})} \label{NO}
Two-component links $L$ and $L'$ are self $C_2$-equivalent if and only if 
$\lk(L)=\lk(L')$ and $\beta(L)=\beta(L')$.
\end{theorem}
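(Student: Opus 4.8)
The plan is to mirror the structure of the proof of Theorem \ref{NOstring} (the string link version) as closely as possible, substituting the link-theoretic analogues of each ingredient that have already been assembled in the excerpt. The ``only if'' direction is immediate: both $\lk$ and $\beta$ are invariants of self $C_2$-equivalence. Indeed, $\lk$ is unchanged by any self-move (it only sees pairwise linking, which a single-component move cannot alter), and the crossing-change formula for $\beta$ stated in \S\ref{genSL}, together with Figure \ref{simdelta}, shows that $\beta$ is preserved under self $C_2$-moves. So the whole content is in the ``if'' direction.

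For the converse, I would assume $\lk(L)=\lk(L')=\lambda$ and $\beta(L)=\beta(L')$, and proceed in three steps. First, since the linking numbers agree, $L$ and $L'$ are link homotopic. Second, I invoke Corollary \ref{realization2}: because $\lk(L)=\lk(L')$ and $\beta(L)=\beta(L')$, there is a link homotopy $H=H_+\sqcup H_-$ between $L$ and $L'$ with $\Sigma(H)=0$. The vanishing of $\Sigma(H)$ means precisely that the multiset of double points $\Delta(H_+)$ (weighted by sign and by the value $|l_z-\frac\lambda2|-|\frac\lambda2|$) cancels, so that $\Delta(H_+)$ can be partitioned into pairs $(z,w)$ with $\eps_z=-\eps_w$ and $|l_z-\frac\lambda2|=|l_w-\frac\lambda2|$, and likewise for $\Delta(H_-)$. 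Third, using Lemma \ref{delayed crossing2} I rearrange $H$ (by the tendril/delaying technique) so that within each such canceling pair the two double points occur consecutively in time with nothing in between. Applying Lemma \ref{whitney permutation2} to each consecutive canceling pair shows that passing across that pair is a $\frac12$-quasi-isotopy; concatenating over all pairs gives that $L$ and $L'$ are $\frac12$-quasi-isotopic. Finally, Corollary \ref{whitney1'} converts $\frac12$-quasi-isotopy into self $C_2$-equivalence, completing the proof.

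The step I expect to require the most care is the bookkeeping connecting $\Sigma(H)=0$ to the existence of a sign-canceling, $l$-value-matching pairing of the double points, since for links the relevant invariant of a double point is not $l_z$ itself but the combination $|l_z-\frac\lambda2|-|\frac\lambda2|=\max(\epsilon l_z,\epsilon l_z')$ that appears in the definition of $\Sigma_+(H)$, and one must check that equality of these values is exactly the hypothesis needed by Lemma \ref{whitney permutation2}. This is where the link case genuinely differs from the string link case of Theorem \ref{NOstring}, where $l_z$ was a single well-defined integer rather than an unordered pair $\{l_z,l_z'\}$ constrained by $l_z+l_z'=\lambda$. Fortunately, Lemma \ref{whitney permutation2} has been set up to take precisely the hypothesis $|l_z-\frac\lambda2|=|l_w-\frac\lambda2|$ and $\eps_z=-\eps_w$, and its proof already absorbs the extra freedom of choosing between $l_z$ and $l_z'$ by sliding endpoints; so once the pairing is extracted from $\Sigma(H)=0$, the geometric machinery applies verbatim. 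Thus the proof is essentially a transcription of the string link argument with Corollary \ref{realization1}, Lemma \ref{delayed crossing}, and Lemma \ref{whitney permutation} replaced by their link counterparts Corollary \ref{realization2}, Lemma \ref{delayed crossing2}, and Lemma \ref{whitney permutation2}.
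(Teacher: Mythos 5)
Your proposal is correct and follows exactly the paper's own argument: the paper proves Theorem \ref{NO} by citing Corollaries \ref{realization2} and \ref{whitney1'} and Lemmas \ref{delayed crossing2} and \ref{whitney permutation2} ``similarly to the proof of Theorem \ref{NOstring}'', which is precisely the three-step transcription you describe. Your added care about extracting the canceling pairing from $\Sigma(H)=0$ via the exponents $|l_z-\frac\lambda2|-|\frac\lambda2|$ is a faithful (and correctly worked out) expansion of what the paper leaves implicit.
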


\begin{proof} This follows from Corollaries \ref{realization2} and \ref{whitney1'} and 
Lemmas \ref{delayed crossing2} and \ref{whitney permutation2} similarly to the proof of Theorem \ref{NOstring}.
\end{proof}

\subsection*{Acknowledgements}

The author is grateful to P.~ Akhmetiev, A.~ Gaifullin, Y.~ Kotorii, A.~ Lightfoot, A.~ Yasuhara and the referee 
for useful comments.

\end{document}